\theoremstyle{plain}\theoremheaderfont{\normalfont\bfseries}
\newtheorem{theorem1}{$\mathbf{Theorem}$}
\theoremstyle{nonumberplain}\theoremheaderfont{\bfseries}
\newtheorem{proof1}{Proof of the Theorem \ref{theorem new1}}
\theoremstyle{nonumberplain}\theoremheaderfont{\bfseries}
\newtheorem{proof2}{Proof of the Theorem \ref{Corollary new1}}
\begin{document}

\title{The Moduli Space of Points in the Boundary of Quaternionic Hyperbolic Space%\thanks{Grants or other notes
%about the article that should go on the front page should be
%placed here. General acknowledgments should be placed at the end of the article.}
}
\subtitle{}

%\titlerunning{Short form of title}        % if too long for running head

\author{ Gaoshun Gou \and
        Yueping Jiang %etc.
}

%\authorrunning{Short form of author list} % if too long for running head

\institute{Gaoshun Gou \at
              Department of Mathematics, Hunan-University, Changsha {\rm 410082}, P. R. China \\
              \email{gaoshungou@hnu.edu.cn} %  \\
%             %  if needed
           \and
          \Envelope\, Yueping Jiang \at
              Department of Mathematics, Hunan-University, Changsha {\rm 410082}, P. R. China
              \\ \email{ypjiang@hnu.edu.cn}
 }

\date{}
% The correct dates will be entered by the editor

\maketitle

\begin{abstract}
Let $\mathcal{F}_1(n,m)$ be the space of ordered m-tuples of pairwise distinct points in $\partial \mathbf{H}_{\mathbb{H}}^n$ up to its isometry group $PSp(n,1)$. It is a real $2m^2-6m+5-\sum^{m-n-1}_{i=1}{m-2 \choose n-1+i}$ dimensional algebraic variety when $m>n+1$. In this paper, we construct and describe the moduli space of $\mathcal{F}_1(n,m)$, in terms of the Cartan's angle and cross-ratio invariants, by applying the Moore's determinant.
\keywords{Quaternionic hyperbolic space\and Moduli space\and Moore's determinant\and special-Gram quaternionic matrix\and Invariants}
%\PACS{PACS code1 \and PACS code2 \and more}
\subclass{32M15\and 22E40\and 15B33\and 14J10}
\end{abstract}
\section{Introduction}
Moduli spaces are spaces of solutions of geometric classification problems. They can be viewed as giving a universal space of parameters for the problem.
In this paper, we will study the Moduli space of $\mathcal{F}_1(n,m)$ and the Moduli space of the discrete, faithful, totally loxodromic representation family of groups into $PSp(2,1)$.
In projective space, those Moduli spaces are studied by many researchers.

In $\mathbb{RP}^{n}$, Hakim-Sanler used Bruhat decomposition to study $n+1$ points in a certain standard position in \cite{HakimJ.2000}. They obtained some geometry applications of the structure of double coset space arising in representation theory.

In $\mathbb{CP}^{n}$,
the Moduli spaces for the $PU(n,1)$-congruence classes of ordered distinct points have been well studied. There are three kinds of points: negative, isotropic and positive. The case of negative points was solved by Brehm-Et-Taoui in \cite{Brehm2,Brehm3}. The moduli space of ordered quadruples of distinct isotropic points in $\mathbb{CP}^{2}$ was studied by Falbel, Parker, Platis in \cite{Parker2008,Falbel8} etc. Parker and Platis \cite{Parker2008} defined analogous Fenchel-Nielsen coordinates for complex hyperbolic quasi-conformal representations of surface groups. The traces and cross-ratio invariants were used to parameterize the deformation space of complex hyperbolic totally loxodromic quasi-Fuchsian groups. They obtained some important results related to study of totally loxodromic groups. Falbel and Platis \cite{Falbel8} showed the quotient configuration space $\mathcal{F}$ is a real four dimensional variety and proved the existence of natural complex and CR structures on this space. The case of ordered quadruples of distinct isotropic points in $\mathbb{CP}^{n}$ was investigated by Cunha-Gusevskii in \cite{Cunha4}, then they \cite{Cunha5} further studied the moduli space of ordered m-tuple ($m\geq4$) of distinct isotropic points in $\mathbb{CP}^{n}$. The condition $m=n+1$ was discussed by Hakim-Sanler in \cite{HakimJ.16}. Moreover, the positive points case was concerned in \cite{Cunha6}.

In $\mathbb{HP}^{n}$, the development of the moduli space for the space $\mathcal{F}_1(n,m)$ of ordered distinct points is still in its childhood. The triple of distinct negative points in $\mathbf{H}_{\mathbb{H}}^n$ was considered by Brehm in \cite{Brehm2}.
Recently, Cao \cite{Cao} studied the moduli space of the triple and the quadruple of isotropic points in $\mathbb{HP}^{n}$ without associating quaternionic determinant.

It is natural to ask whether the other cases are still true for the moduli space of ordered distinct points in $\mathbb{HP}^{n}$.
Therefore, we restrict our attention in this article to the Moduli space of ordered m-tuple ($m\geq4$) of distinct points in $\partial \mathbf{H}_{\mathbb{H}}^n$.

The Gram matrices and invariants associated with a well defined determinant were used to study the moduli space of points
in $\mathbb{CP}^{n}$. Thus, if we want to gain additional insight into those of $\mathbb{HP}^{n}$, especially to study the relationship between cross-ratio invariants and the Moduli spaces, a suitable quaternionic determinant is essential. In order to bridge this gap, we use the Moore's determinant (see \cite{E.H.Moore7}).

In this paper, the main purpose is to construct and describe the Moduli space of $PSp(n,1)$-congruence classes of an ordered m-tuple ($m\geq4$) of distinct points in $\partial \mathbf{H}_{\mathbb{H}}^n$ by the special-Gram quaternionic matrices and invariants associated with Moore's determinant. For the purpose, we carry it out in two steps.

Firstly, we overcome the abstractness of the space $\mathcal{F}_1(n,m)$ of m-tuple of distinct points in $\partial \mathbf{H}_{\mathbb{H}}^n$. In this place, the special-Gram quaternionic matrices, defined in Definition \ref{Dfinition2}, plays an important role.
Let $[\mathfrak{p}]$ be an element of the space $\mathcal{F}_1(n,m)$ of m-tuple $\mathfrak{p}=(p_1,\ldots,p_m)$ distinct points in $\partial \mathbf{H}_{\mathbb{H}}^n$. We view $[\mathfrak{p}]$ as the ordered m-tuple $\mathfrak{p}$ up to the action of $PSp(n,1)$. In Definition \ref{definition 3}, we give the definition of equivalent relation of special-Gram qutaternionic matrices. Next, we show that a m-tuple $\mathfrak{p}$ corresponding to a normalized special-Gram quaternionic matrix in section 3.1. Then in Proposition \ref{pro6} and Corollary \ref{Corollary1}, we will see that $[\mathfrak{p}]$ is uniquely corresponding to a normalized special-Gram quaternionic matrix. In fact, the Definition \ref{Dfinition2} help us to realize this uniqueness. So then, we can directly study the normalized special-Gram quaternionic matrices instead of the space $\mathcal{F}_1(n,m)$.

Secondly, we parameterize the normalized special-Gram quaternionic matrices. Here, the quaternionic cross-ratios and a quaternionic Cartan angle as the invariants are necessary.
Let $\mathfrak{p}=(p_1,p_2,p_3,p_4)$ be a quadruple of distinct isotropic points in $\partial\mathbf{H}_{\mathbb{H}}^n$. The cross-ratio is defined by $$\mathbb{X}(p_1,p_2,p_3,p_4)=\langle \mathbf{p}_3,\mathbf{p}_1\rangle_1\langle\mathbf{p}_3,\mathbf{p}_2\rangle_1^{-1}\langle\mathbf{p}_4,\mathbf{p}_2
\rangle_1\langle\mathbf{p}_4,\mathbf{p}_1\rangle_1^{-1},$$
where $\mathbf{p}_i$ is the lift of $p_i$: $\mathbf{p}_i\in \mathbb{H}^{n,1}$.
The quaternionic Cartan's angular invariant of triple $(p_1,p_2,p_3)$ is given by:
$$\mathbb{A_H}=\mathbb{A_H}(p_1,p_2,p_3)=
\arccos\frac{\Re(-\langle\mathbf{p}_1,\mathbf{p}_2,\mathbf{p}_3\rangle_1)}
{|\langle\mathbf{p}_1,\mathbf{p}_2,\mathbf{p}_3\rangle_1|},$$ $$\langle\mathbf{p}_1,\mathbf{p}_2,\mathbf{p}_3\rangle_1=\langle\mathbf{p}_1,\mathbf{p}_2\rangle_1
\langle\mathbf{p}_2,\mathbf{p}_3\rangle_1\langle\mathbf{p}_3,\mathbf{p}_1\rangle_1.$$ Where $$\Re(-\langle\mathbf{p}_1,\mathbf{p}_2,\mathbf{p}_3\rangle_1)>0.$$
Let $\{\mathbb{X}_1,\ldots,\mathbb{X}_d\}$ be the set of cross-ratios associated with an ordered m-tuple $\mathfrak{p}=(p_1,\ldots,p_m)$ of distinct points in $\partial \mathbf{H}_{\mathbb{H}}^n$, where $d=m(m-3)/2$. We observe that $$\{\mathbb{X}_1,\ldots,\mathbb{X}_d,\mathbb{A_H}\}$$
is a parameter system which is invariant under the action of $PSp(n,1)$. This system with a unit pure quaternion $\mathfrak{u}$ and a positive number $r$ forms the minimal parameter system which uniquely determines the $PSp(n,1)$-congruence classes of m-tuple $\mathfrak{p}=(p_1,\ldots,p_m)$, showed in Theorem \ref{Theorem 5}.
Then, we have a characterization of the special-Gram quaternionic matrices associated with ordered m-tuple of distinct points in $\partial \mathbf{H}_{\mathbb{H}}^n$. This characterization gives all the relationships between our invariants, showed in Proposition \ref{Proposition4}, such that we can describe the Moduli space of the space $\mathcal{F}_1(n,m)$.

Our main results of this paper is concluded as following:
\begin{theorem1}\quad\label{theorem new1}
Let
$$\P_1(n,m)=\big\{w\in\mathbb{H}^d\times \mathfrak{sp}(1)\times\mathbb{R}^2:D^\star_{\mathrm{I}^{s}_{m-2}}(w)\geq0,\;s\leq n-1;\;
D^\star_{\mathrm{I}^{s}_{m-2}}(w)=0,\;s>n-1\big\},$$
where $d=m(m-3)/2$.
Then the space $\mathcal{F}_1(n,m)$ is homeomorphic to the set $\P_1(n,m)$.
\end{theorem1}

Therefore, the Moduli space for $\mathcal{F}_1(n,m)$ is described as $\P_1(n,m)$.

Then, we apply the above theorem to describe the Moduli space of the representation family of groups into $PSp(2,1)$. We have the following theorem.
\begin{theorem1}\label{Corollary new1}
Let $H$ be a finitely generated group with a fixed ordered set of generators, $\mathds{R}\mathrm{ep}^{o}_{\textit{lox}}(H,PSp(2,1))$ be the discrete, faithful, totally loxodromic representation family of $H$ into $PSp(2,1)$.
Then $\mathds{R}\mathrm{ep}^{o}_{\textit{lox}}(H,{\rm PSp}(2,1))$ can be identified with an open subset of $$\P_1(2,2k)\times\Upsilon^k\times \Theta^{2k},$$
where
$$\Upsilon=\{r\in\mathbb{R}\mid r>0,\;r\neq1\},\quad \Theta=[0,\pi].
$$
% for the quaternionic hyperbolic space case. %If we consider the case for complex hyperbolic space then $\Theta=[-\pi,\pi]$.
\end{theorem1}

%Besides, in the process of developing our main results, there are still some others interesting results, such as Theorem \ref{Thorem new2} and Theorem \ref{Thorem new3}. We generalize an elementary property of linear algebra into quaternionic Hermitian algebra in Theorem \ref{Thorem new2} and discuss the degenerate condition with respect to Hermitian form acting on $\mathbb{H}^{n,1}$ in Theorem \ref{Thorem new3}.

Our paper is organised as follows.
In \S 2 we recall some basic facts in quaternion and quaternionic hyperbolic geometry.
In \S 3 we give a characterization of special-Gram quaternionic matrices of ordered m-tuple of distinct points in $\partial \mathbf{H}_{\mathbb{H}}^n$.
In \S 4 we construct and describe the moduli space and prove the main results. Then we apply our results to study of the Moduli space of discrete, faithful, totally loxodromic representation family of groups into $PSp(2,1)$.

When this paper was finished, soon we learnt that Cao \cite{Cao2} has also considered similar problem, but he use the different method and have different results from ours. We give more concrete discussion for the problem in terms of cross-ratios invariants.
\section{Preliminaries}
In this section, we briefly review some facts about quaternions and quaternionic hyperbolic space. We also introduce Moore 's determinant on quaternonic Hermitian matrices.
\subsection{Quaternions and quaternonic matrices}
Let $\mathbb{R}$, $\mathbb{C}$, $\mathbb{H}$ denote the sets of real, complex and quaternion numbers, respectively.
We write $$\mathbb{H}=\{q=t+ix+jy+kz~|~t,x,y,z\in \mathbb{R}\},$$ where $$i^2=j^2=k^2=-1,~ij=k=-ji,~jk=i=-kj,~ki=j=-ik.$$
Any $q\in \mathbb{H}$ can also be written in the form $q=z_1 + jz_2$, where $z_1,z_2\in \mathbb{C}$. Note that $zj=j\overline{z}$ for $z\in \mathbb{C}$, since the center of $\mathbb{H}$ is $\mathbb{R}$. Conjugation and modulus is given respectively by $$\overline{q}=\overline{t+ix+jy+kz}=t-ix-jy-kz,\quad|q|=\sqrt{q\overline{q}}=\sqrt{t^2+x^2+y^2+z^2}.$$ Then
$$\overline{q_1 q_2}=\overline{q_2}~\overline{q_1}\quad\text{for}\quad q_1,q_2\in\mathbb{H}.$$
$$\Re(q)=\frac{1}{2}(q+\overline{q})=t,$$
$$\Re(q_1q_2)=\Re(q_2q_1)=\Re(\overline{q}_1\overline{q}_2)=\Re(\overline{q}_1\overline{q}_2)
.$$
A pure quaternion is of the form $${\rm Pu}(q)=\frac{1}{2}(q-\overline{q})=ix+jy+kz$$ and  a unit quaternion is of the form $${\rm Un}(q)=\frac{q}{|q|}\in S,$$
where $S$ is the unit sphere in $\mathbb{H}$. Let $\mathfrak{sp}(1)$ is the Lie algebra of Lie group $Sp(1)$. Then $$\mathfrak{sp}(1)=\mathrm{Pu}(\mathbb{H})\cap S$$ will be the set of all unit pure quaternion.
Any quaternion is expressible as a power of a pure quaternion and it can be also  written in the form $$q=|q|e^{\mathfrak{u}\theta}:=|q|(\cos\theta + \mathfrak{u}\sin\theta),$$
where $\mathfrak{u}\in \mathfrak{sp}(1)$ and $\theta\in[0,\pi]$.

Next, we introduce the Moore 's determinant of a Hermitian quaternionic $n\times n$ matrix $M$. Such a matrix has quaternionic entries and  satisfies $M^*=M$, where $M^*$ is  the conjugate transpose of $M$. There are several kinds of determinants associated to a quaternionic matrix. Moore 's determinant is the one which is convenient for our purposes. For more information about Moore 's determinant, see \cite{E.H.Moore7}.

Moore 's determinant was first studied by E. H. Moore and we denote it by $\mathrm{det}_{\mathbb{M}}$. The following definition was given in \cite{E.H.Moore7}:
\begin{definition}\quad
For any Hermitian $n\times n$ quaternionic matrix $M$, the \textit{Moore's determinant} of $M$ is given by
$$\mathrm{det}_{\mathbb{M}}(M)=\sum_{\sigma\in s_n}\varepsilon(\sigma)m_{n_{11}n_{12}}\cdots m_{n_{1l_1}n_{11}}m_{n_{21}n_{22}}\cdots m_{n_{rl_r}n_{r1}},$$
where $S_n$ is group of permutations.
\end{definition}
Note that any $\sigma\in S_n$ can be written as a product of disjoint cycles. We permute each cycle until the leading index is the smallest and  then sort the cycles in increasing order according to the first number of each cycle. That is
 $$\sigma=(n_{11}\cdots n_{1l_1})(n_{21}\cdots n_{2l_2})\cdots (n_{r1}\cdots n_{rl_r}),$$
where for each $i$, we let $n_{i1}<n_{ij}$ for all $j>1$, $n_{11}<n_{21}\cdots <n_{r1}$ and
 $\varepsilon(\sigma)=(-1)^{n-r}$.
In fact, we can arrange the cycles in decreasing order according to initial index, used in \cite{Aslaksen15}.

We recall the following properties which was proved in \cite{Alesker29}.

\begin{lemma}\quad\label{Lemma1}
Let the quaterninic matrix $$M=\left(
        \begin{array}{cc}
          M_1 & \mathbf{0} \\
          \mathbf{0} & M_2 \\
        \end{array}
      \right)
,$$
where $M=M^*$, $M_1=M_{1}^{*}$, $M_2=M_{2}^{*}$. Then $$\mathrm{det}_{\mathbb{M}}(M) = \mathrm{det}_{\mathbb{M}}(M_1) \mathrm{det}_{\mathbb{M}}(M_2).$$
\end{lemma}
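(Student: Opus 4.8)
The plan is to exploit the explicit permutation-sum definition of the Moore determinant together with the vanishing of the off-diagonal blocks of $M$. Write $M_1$ as a $p\times p$ block and $M_2$ as a $q\times q$ block with $p+q=n$, and identify the index set $\{1,\dots,n\}$ with the disjoint union $\{1,\dots,p\}\sqcup\{p+1,\dots,n\}$, so that the entries of $M_1$ are the $m_{ab}$ with $a,b\le p$, the entries of $M_2$ are the $m_{ab}$ with $a,b>p$, and $m_{ab}=0$ whenever $a$ and $b$ lie in different blocks.

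First I would show that only block-respecting permutations contribute. For $\sigma\in S_n$ written in its disjoint-cycle form, each cycle $(n_{i1}\cdots n_{il_i})$ contributes the ordered product $m_{n_{i1}n_{i2}}\cdots m_{n_{il_i}n_{i1}}$. If the indices of a single cycle are not all contained in one block, then somewhere along this cyclic product two consecutive indices lie in different blocks, so the corresponding factor vanishes and kills the entire term. Hence the only permutations with a nonzero contribution are those each of whose cycles lies entirely in $\{1,\dots,p\}$ or entirely in $\{p+1,\dots,n\}$; these are exactly the permutations of the form $\sigma=\sigma_1\sigma_2$ with $\sigma_1\in S_p$ acting on the first block (fixing the second) and $\sigma_2\in S_q$ acting on the second block (fixing the first), and $\sigma\mapsto(\sigma_1,\sigma_2)$ is a bijection onto $S_p\times S_q$.

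Next I would verify the sign and the factorization of each term. Since $\sigma_1$ and $\sigma_2$ act on disjoint index sets, the total number of cycles satisfies $r=r_1+r_2$, so $\varepsilon(\sigma)=(-1)^{n-r}=(-1)^{p-r_1}(-1)^{q-r_2}=\varepsilon(\sigma_1)\varepsilon(\sigma_2)$. The delicate point, which I expect to be the main obstacle, is the ordering: because the Moore determinant is a sum of \emph{ordered} products of non-commuting quaternions, one must check that the factors coming from $M_1$ all precede those coming from $M_2$. This is guaranteed by the normalization convention in the definition: each cycle is rotated so its smallest index leads, and the cycles are listed in increasing order of these leading indices. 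Every block-one cycle has leading index $\le p$ and every block-two cycle has leading index $>p$, so in the sorted list all block-one cycles come first. Consequently the term for $\sigma$ factors, in the correct order, as the $\sigma_1$-term of $\mathrm{det}_{\mathbb{M}}(M_1)$ times the $\sigma_2$-term of $\mathrm{det}_{\mathbb{M}}(M_2)$.

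Finally I would assemble the sum. Summing over all contributing $\sigma$ and using the bijection with $S_p\times S_q$ yields a double sum $\sum_{\sigma_1,\sigma_2}\varepsilon(\sigma_1)\varepsilon(\sigma_2)\,T_1(\sigma_1)T_2(\sigma_2)$, where $T_1(\sigma_1)$ and $T_2(\sigma_2)$ denote the ordered products associated with $\sigma_1$ and $\sigma_2$. Since each $T_1$-factor appears on the left and each $T_2$-factor on the right, the distributive law (valid even over the non-commutative $\mathbb{H}$) lets me factor this as $\big(\sum_{\sigma_1}\varepsilon(\sigma_1)T_1(\sigma_1)\big)\big(\sum_{\sigma_2}\varepsilon(\sigma_2)T_2(\sigma_2)\big)=\mathrm{det}_{\mathbb{M}}(M_1)\,\mathrm{det}_{\mathbb{M}}(M_2)$, which is the claim. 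One may note that both Moore determinants of Hermitian matrices are in fact real, so no ordering ambiguity survives in the final product, but the argument does not rely on this.
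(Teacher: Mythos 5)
Your proof is correct, but note that the paper does not actually prove this lemma at all: it is stated as a recalled fact and attributed to \cite{Alesker29}, so your argument is a self-contained substitute rather than a parallel of the paper's reasoning. Your route works directly from the permutation-sum definition recorded in Section 2.1: any cycle meeting both blocks picks up a factor from a vanishing off-diagonal block, so only permutations of the form $\sigma_1\sigma_2$ with $(\sigma_1,\sigma_2)\in S_p\times S_q$ survive; the sign is multiplicative since $r=r_1+r_2$ (this uses that fixed points are counted as $1$-cycles, which is indeed the convention required for $\varepsilon(\sigma)=(-1)^{n-r}$ to be the usual sign); and the normalization convention --- each cycle led by its smallest index, cycles sorted by increasing leading index --- places every block-one factor to the left of every block-two factor, so each surviving term factors in the right order and the double sum splits by distributivity over $\mathbb{H}$. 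That ordering step is exactly the point where non-commutativity could have derailed the argument, and you identified and resolved it correctly. What your approach buys is an elementary, definition-level proof of precisely this statement, which fits naturally after the paper's definition of $\mathrm{det}_{\mathbb{M}}$; what the paper's citation buys is economy, since \cite{Alesker29} also supplies the harder companion properties (such as part (ii) of Lemma~\ref{Lemma2} and the Sylvester-type theorems) that a bare cycle-expansion argument would not easily yield.
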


\begin{lemma}\quad\label{Lemma2}
(i) The Moore 's determinant of any  Hermitian complex matrix when this is considered as  Hermitian quaternionic matrix is equal to its usual determinant.
(ii) $\mathrm{det}_{\mathbb{M}}(U^*MU)=\det_{\mathbb{M}}(M)\det_{\mathbb{M}}(U^*U)$ for any Hermitian quaternionic matrix $M$ and any quaternionic matrix $U$.
\end{lemma}
Next is the quaternionic version of Sylvester's principle of inertia. It was studied carefully by Moore in \cite[P. 191-197]{Barn.Moore28}. It can also be derived by Lemma \ref{Lemma2} and the Claim 1.1.7 of \cite{Alesker29} and \cite{Zhang1997}. Note that the complex version was studied by
J.R. Parker in \cite{Parker2007}.

\begin{theorem}\quad[Sylvester's principle~of inertia] \label{theorem1}
The signature of a Hermitian quaternionic matrix is independent of the means of finding it. In particular, if $H_1$ and $H_2$ are two $k\times k$ matrices with the same signature then there exists a $k\times k$ matrix $U$ so that $H_2=U^*H_1U$.
\end{theorem}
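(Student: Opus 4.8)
The plan is to prove the statement in the standard two-part form: first show that every Hermitian quaternionic matrix is congruent to a diagonal matrix whose entries lie in $\{+1,-1,0\}$, and then show that the number of entries of each sign is an intrinsic invariant of the congruence class, so that the signature is well defined. The ``in particular'' clause will then be an immediate formal consequence of congruence to a common normal form.

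First I would establish diagonalizability. Since $H=H^*$, the spectral theorem for Hermitian quaternionic matrices (see \cite{Zhang1997}) guarantees that the right eigenvalues of $H$ are real and that there is a quaternionic unitary (symplectic) matrix $Q$, $Q^*Q=I$, with $Q^*HQ=\mathrm{diag}(\lambda_1,\dots,\lambda_k)$ and $\lambda_i\in\mathbb{R}$. A further real diagonal congruence by $S=\mathrm{diag}(s_1,\dots,s_k)$, with $s_i=|\lambda_i|^{-1/2}$ when $\lambda_i\neq0$ and $s_i=1$ otherwise, replaces each nonzero $\lambda_i$ by $\mathrm{sgn}(\lambda_i)$. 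Thus $U_0=QS$ satisfies $U_0^*HU_0=\mathrm{diag}(\underbrace{1,\dots,1}_{p},\underbrace{-1,\dots,-1}_{q},\underbrace{0,\dots,0}_{z})=:D$, the normal form.

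Next I would prove that the triple $(p,q,z)$ does not depend on the chosen reduction. For this I regard $\mathbb{H}^k$ as a right $\mathbb{H}$-module and note that for Hermitian $H$ the quantity $v^*Hv$ is real for every $v$, since $(v^*Hv)^*=v^*H^*v=v^*Hv$. Define $p^+(H)$ to be the maximal quaternionic dimension of a right submodule on which $v^*Hv>0$ for all $v\neq0$, define $p^-(H)$ analogously with the reverse inequality, and let $z(H)=\dim_{\mathbb{H}}\ker H$. Under any congruence $H\mapsto U^*HU$ with $U$ invertible one has $v^*(U^*HU)v=(Uv)^*H(Uv)$, so $U$ induces a dimension-preserving bijection of submodules that preserves positive-definiteness, negative-definiteness and the kernel; hence $p^+,p^-,z$ are congruence invariants. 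Reading these invariants off the diagonal normal form $D$ gives $p^+=p$, $p^-=q$ and $z(D)=z$, so the counts produced by any diagonalization coincide with these intrinsic numbers. This is exactly the assertion that the signature is independent of the means of finding it.

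The remaining obstacle is the ``in particular'' clause, which is now formal. If $H_1$ and $H_2$ have the same signature then, by the previous paragraph, there are invertible $A,B$ with $A^*H_1A=D=B^*H_2B$ for the common normal form $D$. Eliminating $D$ gives $H_2=(B^{-1})^*A^*H_1AB^{-1}=(AB^{-1})^*H_1(AB^{-1})$, so $U=AB^{-1}$ is the required matrix. The one genuinely quaternionic difficulty throughout is the non-commutativity of $\mathbb{H}$: one must work with right modules and right eigenvalues and invoke the quaternionic spectral theorem rather than a naive determinant computation, since the Moore determinant together with Lemma \ref{Lemma2} detects only the sign of $\det_{\mathbb{M}}$ (equivalently the parity of $q$) and not the full signature. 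Once the real-spectrum diagonalization is in hand, the inertia argument is formally identical to the real and complex cases.
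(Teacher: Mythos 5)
Your proof is correct, but note that the paper does not actually prove Theorem \ref{theorem1} at all: it states the result with citations only, pointing to Moore--Barnard \cite[P. 191-197]{Barn.Moore28} and remarking that it can be derived from Lemma \ref{Lemma2} together with Claim 1.1.7 of \cite{Alesker29} and \cite{Zhang1997}. Your argument is essentially the standard derivation that those references encapsulate: the quaternionic spectral theorem (real right eigenvalues, unitary diagonalization, as in \cite{Zhang1997}) followed by a real diagonal rescaling yields congruence to $\mathrm{diag}(\mathrm{I}_p,-\mathrm{I}_q,\mathbf{0})$ --- this is precisely Zhang's Corollary 6.2, which the paper itself invokes in \S 3.2 --- and the inertia counts are then shown to be intrinsic via maximal definite right submodules, so what you have written is a complete version of the proof the paper delegates to the literature rather than a different route. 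Two steps you gloss over are easily repaired: the eigenvalues must first be sorted by an additional permutation congruence (a unitary one, so harmless) to reach the stated normal form, and the identification $p^+(D)=p$ for the normal form $D$ needs the usual dimension count (a positive submodule of dimension exceeding $p$ would meet the span of the last $q+z$ coordinates nontrivially), which is legitimate here because $\mathbb{H}$ is a division ring, so dimension theory for right $\mathbb{H}$-modules behaves as over a field. Your closing observation is also apt: since Lemma \ref{Lemma2} gives $\mathrm{det}_{\mathbb{M}}(U^*MU)=\mathrm{det}_{\mathbb{M}}(M)\,\mathrm{det}_{\mathbb{M}}(U^*U)$, the Moore determinant only controls the sign of $\mathrm{det}_{\mathbb{M}}$ under congruence, so the spectral-theorem route is genuinely needed for the full signature. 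Finally, your $U=AB^{-1}$ is invertible, which is the form in which the paper actually uses the theorem (e.g.\ $U\in GL_m(\mathbb{H})$ in the proof of Proposition \ref{pro4}).
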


The quaternionic generalization of the standard Sylvester criterion was given in Theorem 1.1.13 of \cite[P. 10]{Alesker29}. We restate it as follows:
\begin{theorem}\quad[Sylvester's criterion]\label{Theorem 3}
A Hermitian quaternionic matrix $M$ is positive definite if and only if its leading principal minors are all positive.

\end{theorem}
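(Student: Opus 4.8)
The plan is to argue by induction on the size $n$ of $M$, reducing the $n\times n$ case to the $(n-1)\times(n-1)$ case by a Schur--complement congruence and then tracking signs through the multiplicativity properties of the Moore determinant. The base case $n=1$ is immediate: a Hermitian $1\times1$ quaternionic matrix is a real scalar $m$ with $\det_{\mathbb{M}}=m$, and it is positive definite exactly when $m>0$. Throughout I will use the auxiliary fact that a positive definite Hermitian quaternionic matrix $H$ has $\det_{\mathbb{M}}(H)>0$; since the cone of such matrices is convex, hence connected, and contains the identity (where $\det_{\mathbb{M}}=1$), this follows from the continuity of $\det_{\mathbb{M}}$ together with its non-vanishing on nonsingular Hermitian matrices (equivalently, from the fact that $\det_{\mathbb{M}}(H)$ is the product of the real eigenvalues of $H$).

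For the forward implication, suppose $M$ is positive definite. Restricting the form $\mathbf{x}^{*}M\mathbf{x}>0$ to vectors supported on the first $k$ coordinates shows that each leading principal submatrix $M_k$ is itself positive definite, whence $\det_{\mathbb{M}}(M_k)>0$ by the auxiliary fact. Thus all leading principal minors are positive.

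For the converse, assume all leading principal minors $\det_{\mathbb{M}}(M_1),\dots,\det_{\mathbb{M}}(M_n)$ are positive. The first $n-1$ of these are exactly the leading minors of the $(n-1)\times(n-1)$ block $M_{n-1}$, so by the inductive hypothesis $M_{n-1}$ is positive definite, hence invertible with Hermitian inverse. Writing
$$M=\left(\begin{array}{cc} M_{n-1} & b \\ b^{*} & d \end{array}\right),\qquad U=\left(\begin{array}{cc} I & -M_{n-1}^{-1}b \\ \mathbf{0} & 1 \end{array}\right),$$
a direct computation gives $U^{*}MU=\left(\begin{array}{cc} M_{n-1} & \mathbf{0} \\ \mathbf{0} & s \end{array}\right)$ with Schur complement $s=d-b^{*}M_{n-1}^{-1}b$; because $M_{n-1}^{-1}$ is Hermitian, $b^{*}M_{n-1}^{-1}b$ is self-conjugate and $s$ is a real scalar. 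Now Lemma \ref{Lemma1} gives $\det_{\mathbb{M}}(U^{*}MU)=\det_{\mathbb{M}}(M_{n-1})\cdot s$, while Lemma \ref{Lemma2}(ii) gives $\det_{\mathbb{M}}(U^{*}MU)=\det_{\mathbb{M}}(M)\cdot\det_{\mathbb{M}}(U^{*}U)$. Since $U$ is invertible, $U^{*}U$ is positive definite and $\det_{\mathbb{M}}(U^{*}U)>0$ by the auxiliary fact; combined with $\det_{\mathbb{M}}(M_{n-1})>0$ and $\det_{\mathbb{M}}(M)>0$, the two identities force $s>0$. Therefore $\mathrm{diag}(M_{n-1},s)$ is positive definite, and by Sylvester's law of inertia (Theorem \ref{theorem1}) $M$, being congruent to it, has signature $(n,0)$; that is, $M$ is positive definite.

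The main obstacle is the auxiliary positivity fact, which must be secured without circularity: the delicate point is the non-vanishing of $\det_{\mathbb{M}}$ on nonsingular Hermitian quaternionic matrices, a genuinely quaternionic input that I would obtain from the eigenvalue description of the Moore determinant (or from the references cited before the statement). The remaining care is bookkeeping forced by non-commutativity---verifying that the unipotent congruence really diagonalizes $M$ and that $b^{*}M_{n-1}^{-1}b$ is real---after which Lemmas \ref{Lemma1} and \ref{Lemma2} and Theorem \ref{theorem1} perform the sign-tracking mechanically.
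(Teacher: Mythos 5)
Your proof is sound, but there is nothing in the paper to compare it against: the paper never proves Theorem \ref{Theorem 3}. It is stated as a quotation of Theorem 1.1.13 of \cite{Alesker29} (``We restate it as follows''), with no argument given. Relative to the paper, your inductive Schur-complement argument is therefore genuinely new content, and it holds up: the congruence $U^{*}MU=\mathrm{diag}(M_{n-1},s)$ with $s=d-b^{*}M_{n-1}^{-1}b$ real is exactly as you compute, and Lemma \ref{Lemma1}, Lemma \ref{Lemma2}(ii) and Theorem \ref{theorem1} do the sign bookkeeping correctly. The one imported ingredient is the auxiliary fact that $\det_{\mathbb{M}}>0$ on positive definite Hermitian matrices (equivalently, that $\det_{\mathbb{M}}$ does not vanish on nonsingular Hermitian matrices), which you rightly isolate as the genuinely quaternionic input; it can be secured without circularity from material the paper already uses elsewhere: by the unitary diagonalization of Hermitian quaternionic matrices (Corollary 6.2 of \cite{Zhang1997}, invoked in Section 3.2 of the paper), any Hermitian $H$ equals $U^{*}\Lambda U$ with $U^{*}U=\mathrm{I}$ and $\Lambda$ real diagonal, so Lemma \ref{Lemma2} gives $\det_{\mathbb{M}}(H)=\det_{\mathbb{M}}(\Lambda)$, the product of the real eigenvalues, which is positive when $H$ is positive definite; alternatively this is Theorem 1.1.9 of \cite{Alesker29}, proved there before the Sylvester criterion. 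As for what each route buys: the paper's citation keeps the preliminaries lean, whereas your argument makes the criterion self-contained within the paper's stated toolkit, and it is stylistically of a piece with the paper's own proof of the semidefinite analogue, Theorem \ref{Thorem new2} --- which, note, itself invokes Theorem \ref{Theorem 3} and the same kind of determinant bookkeeping, so a self-contained proof of Theorem \ref{Theorem 3} would close the one link in that chain that the paper leaves to the literature.
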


An analogous theorem holds for characterizing positive semi-definite Hermitian quaternionic matrices, which is showed in following theorem:

\begin{theorem}\quad\label{Thorem new2}
A Hermitian quaternionic matrix $M$ is positive semi-definite if and only if its leading principal minors are all nonnegative.
\end{theorem}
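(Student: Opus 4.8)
The plan is to prove the two implications separately: necessity follows quickly from the congruence theory underlying Moore's determinant, while sufficiency is best attacked by a perturbation argument that reduces the semi-definite case to the positive-definite criterion of Theorem \ref{Theorem 3}.

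\textbf{Necessity.} First I would use Sylvester's principle of inertia (Theorem \ref{theorem1}) together with Lemma \ref{Lemma2} to write each leading principal submatrix $M_k$ in a normal form $M_k = U^\ast \Lambda_k U$ with $\Lambda_k$ a real diagonal matrix recording the inertia of $M_k$. The block $M_k$ inherits positive semi-definiteness from $M$, since restricting the quadratic form $v\mapsto\langle Mv,v\rangle$ to vectors supported on the first $k$ coordinates gives exactly $\langle M_k v',v'\rangle$; hence $\Lambda_k$ has no negative entries. Applying parts (i) and (ii) of Lemma \ref{Lemma2} yields $\det_{\mathbb{M}}(M_k)=\det_{\mathbb{M}}(U^\ast U)\det_{\mathbb{M}}(\Lambda_k)$, in which the first factor is positive and the second is a product of nonnegative numbers. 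Thus every leading principal minor is nonnegative.

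\textbf{Sufficiency.} For the converse I would argue by continuity. Fix $\epsilon>0$ and set $M_\epsilon=M+\epsilon I_n$, whose $k$-th leading principal submatrix is $M_k+\epsilon I_k$. The aim is to show each leading principal minor $\det_{\mathbb{M}}(M_k+\epsilon I_k)$ is strictly positive; granting this, Theorem \ref{Theorem 3} makes $M_\epsilon$ positive definite, so $\langle M_\epsilon v,v\rangle=\langle Mv,v\rangle+\epsilon|v|^2>0$ for every $v\neq0$. Letting $\epsilon\to0^+$ then gives $\langle Mv,v\rangle\geq0$ for all $v$, i.e. $M$ is positive semi-definite.

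\textbf{Main obstacle.} The hard part is precisely the claim that nonnegativity of the leading principal minors of $M$ forces strict positivity of those of the perturbation $M_\epsilon$. Using the spectral decomposition of the Hermitian block $M_k$, the $k$-th leading minor of $M_\epsilon$ equals the real polynomial $\prod_i(\lambda_i^{(k)}+\epsilon)$, whose value at $\epsilon=0$ is the nonnegative number $\det_{\mathbb{M}}(M_k)$ but whose sign for small $\epsilon$ is governed by the full spectrum of $M_k$, not merely by its determinant. To close the gap one must control the eigenvalues of all the nested blocks simultaneously, for instance by an induction on $n$ combining the inductive positive semi-definiteness of the $(n-1)\times(n-1)$ leading block with a Schur-complement analysis of the last row and column. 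I expect the delicate case to be a singular leading block, where the vanishing determinant carries no information about the sign of the perturbed minor; this is exactly the point at which the leading-minor hypothesis is more subtle than the analogous statement for all principal minors, and it is the step that will require the most care.
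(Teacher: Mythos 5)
Your necessity argument is fine and essentially matches the paper's (restrict the quadratic form to the relevant coordinates, then use inertia plus Lemma \ref{Lemma2} to get a nonnegative Moore determinant). But the obstacle you flag at the end of your sufficiency sketch is not merely delicate --- it is fatal, and the proof cannot be completed from the hypothesis you are using. With only \emph{leading} principal minors nonnegative the statement is false: take $M=\left(\begin{smallmatrix}0&0\\0&-1\end{smallmatrix}\right)$, whose leading principal minors are $0$ and $0$, yet $M$ is not positive semi-definite; moreover $\mathrm{det}_{\mathbb{M}}(M+\epsilon \mathrm{I}_2)=\epsilon(\epsilon-1)<0$ for $0<\epsilon<1$, so your key claim that the leading minors of $M_\epsilon$ are strictly positive fails outright, and no Schur-complement induction on the nested leading blocks can rescue it --- the information that is missing lives in a \emph{non-leading} principal minor (here the entry $-1$). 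The correct hypothesis is nonnegativity of \emph{all} principal minors, which is what the paper's proof actually assumes (its necessity part ranges over arbitrary index sets $i_1<\cdots<i_k$, and the companion statement Theorem \ref{Theorem 6} is likewise phrased with all principal minors of $G^{\star}$); the word ``leading'' in the theorem's statement is a slip.

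Once the hypothesis is strengthened, the paper closes your gap with exactly the missing ingredient you were looking for: by Proposition 1.1.11 of \cite{Alesker29}, the Moore determinant of a Hermitian quaternionic matrix satisfies
$\mathrm{det}_{\mathbb{M}}(A_k+\lambda \mathrm{I}_k)=\lambda^k+p_1\lambda^{k-1}+\cdots+p_k$,
where $p_i$ is the sum of \emph{all} $i\times i$ principal minors of $A_k$. Under the all-principal-minors hypothesis each $p_i\geq0$, so $\mathrm{det}_{\mathbb{M}}(A_k+\lambda \mathrm{I}_k)>0$ for every $\lambda>0$ and every $k$, and Sylvester's criterion (Theorem \ref{Theorem 3}) makes $A+\lambda \mathrm{I}$ positive definite. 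The paper then concludes by contradiction: if $X_0^*AX_0=-c<0$, setting $\lambda=c/(X_0^*X_0)$ gives $X_0^*(A+\lambda \mathrm{I})X_0=0$, which is your $\epsilon\to0^+$ limit in contrapositive form. So your sufficiency skeleton (perturb by $\epsilon\mathrm{I}$, apply the positive-definite criterion, pass to the limit) is the same as the paper's; what you are missing is the characteristic-polynomial expansion of $\mathrm{det}_{\mathbb{M}}$ in terms of all principal minors, and the recognition that the hypothesis must be upgraded accordingly for the eigenvalue control you seek to exist at all.
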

\begin{proof}
We first prove necessity. Suppose that $A$ is positive semi-definite and the matrix $A_k$ associated with the principal minors of order $k$ is given by
$$A_k=\left(
        \begin{array}{ccc}
          a_{i_1i_1} & \cdots & a_{i_1i_k} \\
          \vdots & \ddots & \vdots \\
          a_{i_ki_1} &\cdots & a_{i_ki_k} \\
        \end{array}
      \right).$$
Let $Y^* A Y$ and $X^* A_k X$ be the quadratic forms with matrices $A$ and $A_k$, respectively. For any $X_0=(b_{i_1},b_{i_2},\ldots,b_{i_k})^*\neq\mathbf{0}$, we have $Y_0=(c_1,\ldots,c_n)^*$, where $j=1,2,\ldots,n$ and if $j=i_1,i_2,\ldots,i_k$ then $c_j=b_j$, else $c_j=0$.

Since $A$ is positive semi-definite ${Y_0}^*AY_0\geq0$. Thus we obtain ${X_0}^*A_kX_0\geq0$ (i.e. $X^*A_kX$ is positive semi-definite). So there exists a non-degenerate matrix $T_k$, satisfying
$$
\mathrm{det}_{\mathbb M}({T_k}^*A_kT_k)
=\mathrm{det}_{\mathbb M}(A_k)\mathrm{det}_{\mathbb M}({T_k}^*T_k)\geq0.$$

Next, we prove sufficiency. Suppose that the principal minors of $A$ are all non-negative, choose the $k$-th leading principal minor and let $A_k$ be its corresponding matrix, given by
$$
\left(
  \begin{array}{cccc}
           a_{11} & a_{12} & \cdots & a_{1k} \\
           a_{21} & a_{22} & \cdots & a_{2k}\\
        \vdots & \vdots & \ddots & \vdots \\
        a_{k1} & a_{k2} & \cdots & a_{kk} \\
  \end{array}
\right),
$$
where $k=1,2,\cdots,n$. Then we have
$$
A_k+\lambda \mathrm{I}_k=
\left(
\begin{array}{cccc}
           a_{11}+\lambda & a_{12} & \cdots & a_{1k} \\
           a_{21} & a_{22}+\lambda & \cdots & a_{2k}\\
        \vdots & \vdots & \ddots & \vdots \\
        a_{k1} & a_{k2} & \cdots & a_{kk}+\lambda \\
  \end{array}
\right).
$$
According to Proposition 1.1.11 of \cite{Alesker29} we observe that
$$
\mathrm{det}_{\mathbb M}(A_k+\lambda \mathrm{I}_k)=\lambda^k+p_1\lambda^{k-1}+\cdots+p_{k-1}\lambda+p_k,
$$
where $p_i$ is the sum of all principal minors of $A_k$. By the assumption that all principal minors of $A$ are non-negative, we find that $p_i\geq0$.
Thus if $\lambda>0$ then $\mathrm{det}_{\mathbb M}(A_k+\lambda \mathrm{I}_k)>0$. That is to say, by Theorem \ref{Theorem 3}, if $\lambda>0$ then $A_k+\lambda \mathrm{I}_k$ is a positive-definite matrix.

Now, suppose that $A$ is not positive semi-definite matrix, then there exists a non-zero vector
$X_0=(b_1,b_2,\ldots,b_n)^*$ such that $$
{X_0}^*AX_0=-c,\quad c>0.
$$
Let $$\lambda=\frac{c}{{X_0}^*X_0}=\frac{c}{{b_1}^2+\cdots+{b_n}^2}>0.$$
Then
$$
{X_0}^*(A+\lambda \mathrm{I})X_0={X_0}^*AX_0 + {X_0}^*\lambda \mathrm{I} X_0=\lambda{X_0}^*X_0+{X_0}^*AX_0=c-c=0
,$$
 a contradiction.
\qed
\end{proof}

The rank of a quaternionic matrix was studied by Moore and Barnard in \cite[P. 64-70]{Barn.Moore28}, see  also \cite[P. 43]{Zhang1997}. We follow the former for our definition:
\begin{definition}\quad
Let $A$ be a finite but non-zero quaternionic matrix, then the \textit{rank} of $A$, denoted by $\mathrm{rank}(A)$, is defined to be the order of a maximal non-singular minor. If $A=\mathbf{0}$ then $\mathrm{rank}(A)=0$.
\end{definition}
\begin{remark}\quad
For each finite quaternionic matrix $A$ the rank is uniquely determined. It is equal to the number of columns (or rows) of $A$ in a maximal set of left linear independent columns (or rows) of $A$. Consequently, a finite matrix is non-singular if and only if the matrix is square and its rank is equal to its order.
\end{remark}
It could be checked directly that the quaternionic matrices $A$, $A^*$, $AA^*$ and $A^*A$ have the same rank.
\subsection{Quaternionic hyperbolic space}

 In this section, we shortly review  quaternionic hyperbolic space. For more information, see \cite{KGongopadhyay19,I.Kim17,KimI.18,Parker2007}.

Let the Hermitian quaternionic matrices
\begin{align*}
  H_1&=\left(
     \begin{array}{ccc}
       1 & \mathbf{0 }& 0 \\
       \mathbf{0} & \mathrm{I}_{n-1} & \mathbf{0} \\
       0 & \mathbf{0} & -1 \\
     \end{array}
   \right)
 ,~
 H_2=\left(
   \begin{array}{ccc}
     0 & \mathbf{0} & 1 \\
     \mathbf{0} & \mathrm{I}_{n-1} & \mathbf{0} \\
     1 & \mathbf{0} & 0 \\
   \end{array}
 \right)
 ,\\
   H_3&=\left(
     \begin{array}{ccc}
       -1 & \mathbf{0 }& 0 \\
       \mathbf{0} & \mathrm{I}_{n-1} & \mathbf{0} \\
       0 & \mathbf{0} & 1 \\
     \end{array}
   \right)
   ,~
   H_4=\left(
    \begin{array}{ccc}
      0 & -1 & 0 \\
      -1 & 0 & \mathbf{0} \\
      0 & \mathbf{0} & \mathrm{I}_{n-1} \\
    \end{array}
  \right).
\end{align*}
Denote by $\mathbb{H}^{n,1}$ the $(n+1)$-dimensional $\mathbb{H}$-vector space equipped with the Hermitian form of signature $(n,1)$ given by $\Phi_i(\mathbf{z},\mathbf{w})=\langle\mathbf{z},\mathbf{w}\rangle_i=\mathbf{w}^*H_i\mathbf{z}$. Here, $\mathbf{z,w}$ are column vectors in $\mathbb{H}^{n,1}$ and $i=1,2,3,4$. Let $\mathbb{P}$ denote a natural right projection from $ \mathbb{H}^{n,1}\setminus\{0\}$ onto the quaternionic projective space $\mathbb{HP}^n $.
Define subsets $V_-^i$, $V_0^i$, $V_+^i$ of $V=\mathbb{H}^{n,1}\setminus \{0\}$ as follows:
\begin{eqnarray*}
&&
V_-^i=\{V:\langle\mathbf{z,z}\rangle_i < 0\},\\
&&
V_{0}^i=\{V:\langle\mathbf{z,z}\rangle_i = 0\},\\
&&
V_+^i=\{V:\langle\mathbf{z,z}\rangle_i > 0\}.
\end{eqnarray*}
We say that $\mathbf{z}\in \mathbb{H}^{n,1}$ is negative, isotropic or positive if $\mathbf{z}$ is in $V_-$, $V_0$ or $V_+$, respectively. Motivated by relativity, these are sometimes called time-like, light-like and space-like. Their projections to $\mathbb{HP}^n $ are called negative, isotropic and positive points, respectively.

The quaternionic hyperbolic $n$-space $\mathbb{HP}^n$ is then defined by $\mathbf{H}_{\mathbb{H}}^n=\mathbb{P}(V_-^i)$ and its boundary is $\partial\mathbf{H}_{\mathbb{H}}^n=\mathbb{P}(V_0^i)$.
When $i=1~\mathrm{or}~3$, we have a ball model for quaternionic hyperbolic space and when $i=2$ or $4$, we have a Siegel domain model. We may pass between them using a corresponding Cayley transform.

The metric on $\mathbf{H}_{\mathbb{H}}^{n}$ is given by
$$\mathrm{ds}^2=\frac{-4}{\langle\mathbf{z},\mathbf{z}\rangle_i^2}
=\left(
   \begin{array}{cc}
     \langle\mathbf{z},\mathbf{z}\rangle_i & \langle\mathrm{d}\mathbf{z},\mathbf{z}\rangle_i \\
     \langle\mathbf{z},\mathrm{d}\mathbf{z}\rangle_i & \langle\mathrm{d}\mathbf{z},\mathrm{d}\mathbf{z}\rangle_i \\
   \end{array}
 \right)
.$$
Equivalently, it is given by the distance function $\rho$ where
$$\cosh^2\left(\frac{\rho(\mathbb{P}\mathbf{z},\mathbb{P}\mathbf{w})}{2}\right)=
\frac{\langle\mathbf{z},\mathbf{w}\rangle_i\langle\mathbf{w},\mathbf{z}\rangle_i}
{\langle\mathbf{z},\mathbf{z}\rangle_i\langle\mathbf{w},\mathbf{w}\rangle_i}.$$
We consider the Lie group ${\rm Sp}(n,1)=\{M\in GL_{n+1}(\mathbb{H}):M^*H_iM=H_i\}$. The isometry group of $\mathbf{H}_{\mathbb{H}}^{n}$ is the group ${\rm PSp}(n,1)={\rm Sp}(n,1)/\pm\mathrm{I}_{n+1}$. This group is a non-compact real semi-simple Lie group.

We recall the following Proposition which was showed in \cite[P. 52]{Chen1974}:
\begin{proposition}\quad\label{Proposition1}
${\rm Sp}(n,1)$ acts transitively on $V_-$ and doubly transitively on $V_0$.
\end{proposition}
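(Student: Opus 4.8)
The plan is to reduce both assertions to a single linear-algebra construction and then invoke Sylvester's principle of inertia (Theorem \ref{theorem1}). The unifying observation is the following: if $\{\mathbf{f}_1,\dots,\mathbf{f}_{n+1}\}$ is a basis of $\mathbb{H}^{n,1}$ whose Gram matrix with respect to $\langle\cdot,\cdot\rangle_i$ equals $H_i$, then the matrix $M$ having the $\mathbf{f}_j$ as columns satisfies $M^*H_iM=H_i$, i.e. $M\in{\rm Sp}(n,1)$, and $M$ carries each standard basis vector $\mathbf{e}_j$ to $\mathbf{f}_j$. Hence, to move a given vector (or ordered pair of vectors) to a standard one it suffices to complete it to such an adapted basis, and composing the two maps so obtained yields the transitivity statement.

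First I would treat $V_-$, working in the ball model $H_1={\rm diag}(1,\dots,1,-1)$, for which $\langle\mathbf{e}_{n+1},\mathbf{e}_{n+1}\rangle_1=-1$. Given any $\mathbf{z}\in V_-$, right multiplication by a positive real scalar normalizes it to $\langle\mathbf{z},\mathbf{z}\rangle_1=-1$; since ${\rm Sp}(n,1)$ commutes with right scalar multiplication, it is enough to work at this normalization, that is, to act transitively on negative lines. The orthogonal complement $\mathbf{z}^{\perp}$ is then an $n$-dimensional subspace on which the restricted form is positive definite, as one sees from the signature count together with Theorem \ref{Theorem 3}. A quaternionic Gram--Schmidt process on $\mathbf{z}^{\perp}$ produces an orthonormal basis $\mathbf{e}_1',\dots,\mathbf{e}_n'$, and then $\{\mathbf{e}_1',\dots,\mathbf{e}_n',\mathbf{z}\}$ has Gram matrix $H_1$; the associated $M\in{\rm Sp}(n,1)$ sends $\mathbf{e}_{n+1}$ to $\mathbf{z}$, which gives transitivity.

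Next I would treat $V_0$, now in the Siegel model $i=2$, for which $\mathbf{e}_1$ and $\mathbf{e}_{n+1}$ are isotropic with $\langle\mathbf{e}_1,\mathbf{e}_{n+1}\rangle_2=1$; this ordered pair is the standard target. Given two distinct isotropic points with lifts $\mathbf{p},\mathbf{q}$, the form of signature $(n,1)$ admits no two-dimensional totally isotropic subspace, so linearly independent isotropic vectors must satisfy $\langle\mathbf{p},\mathbf{q}\rangle_2\neq0$; after right scaling $\mathbf{q}$ I may assume $\langle\mathbf{p},\mathbf{q}\rangle_2=1$. The span of $\mathbf{p},\mathbf{q}$ is then a hyperbolic plane, and its orthogonal complement $W$ is positive definite of dimension $n-1$; orthonormalizing $W$ gives $\mathbf{e}_2',\dots,\mathbf{e}_n'$ so that $\{\mathbf{p},\mathbf{e}_2',\dots,\mathbf{e}_n',\mathbf{q}\}$ has Gram matrix $H_2$. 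The corresponding $M\in{\rm Sp}(n,1)$ sends the ordered pair $(\mathbf{e}_1,\mathbf{e}_{n+1})$ to $(\mathbf{p},\mathbf{q})$, and composing two such maps carries any ordered pair of distinct isotropic points to any other, which is precisely double transitivity on $V_0$.

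The main obstacle, and the only place where the quaternionic setting demands genuine care, is the orthonormalization step on the definite complements: because $\mathbb{H}$ is noncommutative, the Gram--Schmidt procedure must use right coefficients consistently, and one must know in advance that each complement is positive definite so that the normalizing factors $\langle\mathbf{w},\mathbf{w}\rangle_i^{-1/2}$ exist as positive reals. Both requirements are furnished by the quaternionic Sylvester theory already set up: Theorem \ref{theorem1} guarantees that the restricted forms carry the claimed signatures independently of the chosen splitting, while Theorem \ref{Theorem 3} confirms positivity of the diagonal entries arising along the process. With these in hand the two constructions above are routine, and the transitivity and double-transitivity assertions follow.
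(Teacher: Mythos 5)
The paper offers no proof of this proposition: it is quoted directly from Chen--Greenberg \cite[P. 52]{Chen1974}, where it follows from the Witt extension theorem (which the paper also imports later as the separate proposition that every linear isometry of a subspace $W\subset\mathbb{H}^{n,1}$ extends to an element of ${\rm Sp}(n,1)$). Your argument is therefore necessarily a different, self-contained route, and in outline it is correct: completing a normalized negative vector (resp.\ a normalized hyperbolic pair of isotropic vectors) to a basis whose Gram matrix is $H_1$ (resp.\ $H_2$) is exactly a hands-on instance of Witt extension in the two cases needed, and your reduction of ``transitivity on $V_-$'' and ``double transitivity on $V_0$'' to their projective formulations via right scaling is the correct reading of the statement, since the form is ${\rm Sp}(n,1)$-invariant and nothing stronger could hold. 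Working in the Siegel model for the isotropic pair is also legitimate, as the two realizations of ${\rm Sp}(n,1)$ are conjugate by a Cayley transform. The signature bookkeeping is sound, with one small misattribution: Sylvester's criterion (Theorem \ref{Theorem 3}) is not what yields positive definiteness of the orthogonal complements; what you actually need is diagonalizability of a quaternionic Hermitian form together with the law of inertia (Theorem \ref{theorem1}), which you do also invoke.

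The one point you must repair is the unsupported claim, in the $V_0$ case, that a form of signature $(n,1)$ admits no two-dimensional totally isotropic subspace, hence $\langle\mathbf{p},\mathbf{q}\rangle_2\neq0$ for linearly independent isotropic $\mathbf{p},\mathbf{q}$. Inside this paper that fact is precisely Proposition \ref{Proposition2} and Theorem \ref{Thorem new3}, and both are derived \emph{from} the proposition you are proving; citing them, or tacitly assuming them, would make your argument circular. Fortunately an independent proof takes two lines and should be inserted: first, if $\langle\mathbf{p},\mathbf{p}\rangle$, $\langle\mathbf{q},\mathbf{q}\rangle$ and $\langle\mathbf{p},\mathbf{q}\rangle$ all vanish, then expanding $\langle\mathbf{p}\alpha+\mathbf{q}\beta,\mathbf{p}\alpha+\mathbf{q}\beta\rangle$ shows the form vanishes identically on $U={\rm span}(\mathbf{p},\mathbf{q})$, so $U$ would be totally isotropic of dimension $2$; second, $U$ meets the $n$-dimensional positive definite subspace spanned by the first $n$ standard basis vectors only in $\{0\}$, whence $\dim U+n\leq n+1$, i.e.\ $\dim U\leq1$, a contradiction. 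With that inserted, your proof is complete.
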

According to the above Proposition, we conclude:
\begin{proposition}\quad\label{Proposition2}
If  $\mathbf{z}_1,\mathbf{z}_2$ are two distinct isotropic vectors in $\mathbb{H}^{n,1}$ then we have $\langle \mathbf{z}_1,\mathbf{z}_2\rangle_i\neq 0$.
\end{proposition}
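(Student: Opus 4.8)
The plan is to argue by contraposition, and it suffices to treat a single one of the four forms—say the Siegel form $H_2$—since all of $H_1,\dots,H_4$ have signature $(n,1)$ and are congruent via the Cayley transforms noted in \S2.2, and congruences preserve both nullity and orthogonality. So suppose $\langle\mathbf{z}_1,\mathbf{z}_2\rangle_2=0$ with $\mathbf{z}_1,\mathbf{z}_2\in V_0^2$; I would show that $\mathbf{z}_1$ and $\mathbf{z}_2$ project to the same point of $\mathbb{HP}^n$, contradicting the hypothesis that they represent two distinct isotropic points. (Distinctness must be read projectively: any nonzero right multiple of an isotropic vector is again isotropic and orthogonal to it, so the statement can only hold for distinct \emph{points}.) By Proposition \ref{Proposition1}, ${\rm Sp}(n,1)$ acts transitively on $V_0^2$, and its elements preserve $\langle\cdot,\cdot\rangle_2$; hence I may first move $\mathbf{z}_1$ to the standard null vector $\mathbf{e}=(1,\mathbf{0},0)^{*}$ without altering either the nullity of $\mathbf{z}_2$ or the vanishing of $\langle\mathbf{z}_1,\mathbf{z}_2\rangle_2$.

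With $\mathbf{z}_1=\mathbf{e}$ normalized the computation is transparent. Writing $\mathbf{z}_2$ with quaternionic blocks $a\in\mathbb{H}$, $\mathbf{b}\in\mathbb{H}^{n-1}$, $c\in\mathbb{H}$ from top to bottom, the orthogonality relation reads
$$\langle\mathbf{e},\mathbf{z}_2\rangle_2=\mathbf{z}_2^{*}H_2\mathbf{e}=\overline{c}=0,$$
so $c=0$. Substituting this into the nullity of $\mathbf{z}_2$ gives
$$\langle\mathbf{z}_2,\mathbf{z}_2\rangle_2=2\Re(\overline{a}c)+|\mathbf{b}|^2=|\mathbf{b}|^2=0,$$
which forces $\mathbf{b}=\mathbf{0}$. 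Hence $\mathbf{z}_2=(a,\mathbf{0},0)^{*}=\mathbf{e}\,a=\mathbf{z}_1 a$, so $\mathbf{z}_2$ is a right multiple of $\mathbf{z}_1$ and $\mathbb{P}\mathbf{z}_1=\mathbb{P}\mathbf{z}_2$ in $\partial\mathbf{H}_{\mathbb{H}}^n$, which is the contradiction sought.

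The conceptual content, which one could equally present in coordinate-free form, is that in a Hermitian space of signature $(n,1)$ the restriction of the form to the orthogonal complement of a null vector $\mathbf{z}_1$ is positive semi-definite with one-dimensional radical spanned by $\mathbf{z}_1$ itself; consequently the only isotropic vectors orthogonal to $\mathbf{z}_1$ are its right multiples. I expect the only genuine care to be bookkeeping rather than depth: one must read ``distinct isotropic vectors'' as distinct isotropic points (otherwise the claim fails for $\mathbf{z}_2=\mathbf{z}_1\lambda$), and one must keep the quaternionic scalars on the correct side throughout, since $\mathbb{HP}^n$ is formed by the right projection $\mathbb{P}$ and the relevant identities are preserved only under the right action. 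No estimate or determinant computation enters; the argument is a normalization followed by reading off two scalar equations.
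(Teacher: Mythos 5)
Your proof is correct. Under the paper's convention $\langle\mathbf{z},\mathbf{w}\rangle_i=\mathbf{w}^*H_i\mathbf{z}$ the two scalar equations do read $\langle\mathbf{e},\mathbf{z}_2\rangle_2=\overline{c}$ and $\langle\mathbf{z}_2,\mathbf{z}_2\rangle_2=2\Re(\overline{a}c)+|\mathbf{b}|^2$, and your insistence that ``distinct'' be read projectively is a necessary sharpening, since $\mathbf{z}_2=\mathbf{z}_1\lambda$ falsifies the literal vector-level wording.

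Your route differs from the paper's in a small but real way. The paper gives no written proof: Proposition \ref{Proposition2} is presented as an immediate consequence of Proposition \ref{Proposition1}, i.e.\ the intended argument is to use \emph{double} transitivity of ${\rm Sp}(n,1)$ on $V_0$ to carry the pair of distinct isotropic points to a standard pair such as $(1,\mathbf{0},0)^*$ and $(0,\mathbf{0},1)^*$ in the Siegel model, whose pairing equals $1\neq0$; changing lifts alters the pairing only by $\overline{\lambda}(\cdot)\mu$ with $\lambda,\mu\neq0$, so nonvanishing is unaffected. You instead argue by contraposition and consume strictly less group theory: only simple transitivity on the null cone, to normalize $\mathbf{z}_1=\mathbf{e}$, after which the computation shows the radical of the form restricted to $\mathbf{e}^\perp$ is exactly the line $\mathbf{e}\mathbb{H}$. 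What each buys: the paper's derivation is a one-liner given the strong Chen--Greenberg input, while yours is essentially self-contained and is in fact the linear-algebra lemma from which double transitivity on pairs of isotropic lines is usually deduced, so it proves something slightly more primitive. One caution confirming a good instinct on your part: the paper's Lemma \ref{Theorem2} asserts the same ``radical is a line'' fact, but its proof cites Proposition \ref{Proposition2}, so quoting that lemma here would be circular; your direct coordinate computation avoids this.
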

\section{A characterization of special-Gram quaternionic matrices}
In this section, we consider the quaternionic hyperbolic space realized by the ball model related to the quadratic form $H_1$. An analogous method was used in the complex case in \cite{Cunha5}. Here, we also have to take into account the non-commutativity of quaternions. %Moreover, we give a little different approach to the classification of space, which exhausts all possibilities.
\subsection{Special-Gram quaternionic matrix}
Let $\mathfrak{p}=(p_1,\ldots,p_m)$ be an ordered $m$-tuple of pairwise distinct isotropic points in $\partial\mathbf{H}_\mathbb{H}^n$, $n\ge 2$ and let  $\tilde{\mathfrak{p}}=(\mathbf{p}_1,\ldots,\mathbf{p}_m)$ be its lift:  $\mathbf{p}_i\in \mathbb{H}^{n,1}$.

\begin{definition}\quad\label{Dfinition2}
The {\it special-Gram quaternionic matrix} associated to $\mathfrak{p}$ is $$
G=G(\tilde{\mathfrak{p}})=(g_{ij})=(\langle\mathbf{p}_j,\mathbf{p}_i\rangle_1),$$
where we assume that $\mathbf{p}_1$ is the standard lift (see \cite{Parker2007}) of $p_1$.
\end{definition}

Note that $g_{ij}$ is zero for $i=j$ and non-zero for $i\neq j$, by Proposition $\ref{Proposition2}$.
It is straightforward to check that special-Gram quaternionic matrices are invariant under the action of the isometry group, i.e. $$G(\tilde{\mathfrak{p}})=G(T(\tilde{\mathfrak{p}}))=(T(\mathbf{p}_1),\ldots ,T(\mathbf{p}_m)),~T\in {\rm PSp}(n,1).
$$

Still, $G$ depends on the choice of lifts $\mathbf{p}_i$ for $i=2,\ldots,m$.

If  $D= {\rm diag}(1,\lambda_2,\ldots,\lambda_m)$ is the diagonal quaternionic $m\times m$ matrix with quaternionic entries $\lambda_i\neq 0$ and $\tilde{\mathfrak{p}}^\prime=(\mathbf{p}_1,\mathbf{p}_2\lambda_2,\ldots,\mathbf{p}_m\lambda_m)$, then we have
$\widehat{G}=G(\tilde{\mathfrak{p}}^\prime)=(\langle p_j\lambda_j,p_i\lambda_i\rangle_1)=(\bar{\lambda}_i\langle p_j,p_i\rangle_1\lambda_j)$, in other words $\widehat{G}=D^*GD$.

We give the following definition of equivalence for special-Gram quaternionic matrices.
\begin{definition}\quad\label{definition 3}
Two special-Gram quaternionic matrices $M$ and $\widehat{M}$ are \textit{equivalent}, if there exists a non-singular diagonal matrix $D= {\rm diag}(1,\lambda_2,\ldots,\lambda_m)$ such that $\widehat{M}=D^*MD$.
\end{definition}

Let $\mathbf{p}_1$ be the standard lift of $p_1$, we will find that each ordered $m$-tuple $\mathfrak{p}$ of pairwise distinct points in $\partial\mathbf{H}_\mathbb{H}^n$ is assigned an equivalence class of special-Gram quaternionic matrices. Let $G$ and $\widehat{G}$ are two equivalence special-Gram quaternionic matrices associated with an $m$-tuple $\mathfrak{p}$. By Lemma \ref{Lemma2}, we have $$\mathrm{det}_{\mathbb M}(\widehat{G})=\lambda~\mathrm{det}_{\mathbb M}(G),$$
where $\lambda>0$. Note that the sign of $\mathrm{det}_{\mathbb M}(G)$ is independent of the chosen lifts $\mathbf{p}_i$.

We have the following proposition, compared with Proposition 2.1 of \cite{Cunha5}:
\begin{proposition}\quad\label{Prop}
Let $\mathfrak{p}=(p_1,\ldots,p_m)$ be an ordered $m$-tuple of pairwise distinct points in $\partial\mathbf{H}_\mathbb{H}^n$. Then the equivalence class of a special-Gram quaternionic matrix associated to $\mathfrak{p}$ contains a unique matrix $G(\mathfrak{p})=(g_{ij})=(\langle\mathbf{p}_j,\mathbf{p}_i\rangle_1)$ with $g_{ii}=0$, $g_{1j}=1$ for $j=2,\ldots,m$.
\end{proposition}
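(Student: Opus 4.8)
The plan is to show existence and uniqueness of a normalized representative in each equivalence class by exploiting the explicit way a diagonal scaling $D=\mathrm{diag}(1,\lambda_2,\ldots,\lambda_m)$ acts on a special-Gram matrix, namely $\widehat{G}=D^*GD$, i.e.\ $\widehat{g}_{ij}=\bar{\lambda}_i\,g_{ij}\,\lambda_j$. The normalization we must reach is $g_{ii}=0$ (which holds automatically by Proposition \ref{Proposition2}, since the points are isotropic) together with $g_{1j}=1$ for all $j=2,\ldots,m$. Since $g_{11}=0$ is forced and the diagonal is fixed to $1$ in the first entry (the lift $\mathbf{p}_1$ is the standard lift and is not rescaled), the only freedom we have is in choosing $\lambda_2,\ldots,\lambda_m$, and the constraints involving the first row are exactly the ones these multipliers can control.

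First I would verify existence. Starting from an arbitrary special-Gram matrix $G=(g_{ij})$ with $g_{1j}\neq 0$ for $j\geq 2$ (guaranteed by Proposition \ref{Proposition2}), I examine the effect of the scaling on the first row: $\widehat{g}_{1j}=\bar{\lambda}_1\,g_{1j}\,\lambda_j=g_{1j}\,\lambda_j$, using $\lambda_1=1$. To force $\widehat{g}_{1j}=1$ I must solve $g_{1j}\lambda_j=1$, which by non-commutativity requires the \emph{left} inverse; setting $\lambda_j=g_{1j}^{-1}$ (the quaternionic inverse, which exists since $g_{1j}\neq 0$) yields $\widehat{g}_{1j}=g_{1j}\,g_{1j}^{-1}=1$. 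This choice of $D$ is non-singular because each $\lambda_j\neq 0$, so the resulting $\widehat{G}=D^*GD$ is an equivalent special-Gram matrix with the desired normalization, and the diagonal entries remain zero since $\widehat{g}_{ii}=\bar{\lambda}_i\cdot 0\cdot\lambda_i=0$. This establishes existence.

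Next I would prove uniqueness. Suppose $G$ and $\widehat{G}$ are both normalized (so $g_{1j}=\widehat{g}_{1j}=1$ for $j\geq 2$) and equivalent, with $\widehat{G}=D^*GD$ for some $D=\mathrm{diag}(1,\lambda_2,\ldots,\lambda_m)$. Reading off the first row gives $\widehat{g}_{1j}=g_{1j}\,\lambda_j$, that is $1=1\cdot\lambda_j=\lambda_j$ for every $j\geq 2$. Hence $\lambda_j=1$ for all $j$, so $D=\mathrm{I}_m$ and therefore $\widehat{G}=G$. This shows the normalized matrix in the equivalence class is unique, completing the proof.

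The step that requires the most care is the \textbf{order of multiplication} dictated by non-commutativity: because $\widehat{g}_{1j}=\bar{\lambda}_1 g_{1j}\lambda_j$ with $\lambda_1=1$, the correcting factor $\lambda_j$ multiplies $g_{1j}$ \emph{on the right}, so I must take $\lambda_j=g_{1j}^{-1}$ and check that $g_{1j}g_{1j}^{-1}=1$ rather than the reversed product. One should confirm that this is consistent and that no compatibility condition is secretly imposed on the remaining entries $g_{ij}$ ($i,j\geq 2$); indeed those entries transform as $\widehat{g}_{ij}=\overline{g_{1i}^{-1}}\,g_{ij}\,g_{1j}^{-1}$, but since the normalization only constrains the first row, they are simply carried along and impose no obstruction to existence or uniqueness.
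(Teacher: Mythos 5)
Your proof is correct and takes essentially the same approach as the paper: existence by rescaling the lifts (equivalently, applying the diagonal equivalence) with $\lambda_j=g_{1j}^{-1}$, noting these inverses exist by Proposition \ref{Proposition2}, and uniqueness by observing that the normalization of the first row forces $\lambda_j=1$. If anything, your uniqueness step, reading $1=\widehat{g}_{1j}=g_{1j}\lambda_j=\lambda_j$ directly off the equivalence $\widehat{G}=D^*GD$, is more explicit than the paper's, which only sketches that two normalized matrices obtained from different lifts must coincide.
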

\begin{proof}
Let $\tilde{\mathfrak{p}}=(\mathbf{p}_1,\ldots,\mathbf{p}_m)$ be a lift of $\mathfrak{p}=(p_1,\ldots,p_m)$. We know that $g_{ij}\neq0$ whence $i\neq j$ due to the fact that $p_i$ are distinct and isotropic.
We re-scale $\mathbf{p}_j$ appropriately, replacing $\mathbf{p}_j$ by $\mathbf{p}_j\lambda_j$, to obtain that $g_{1j}=1$ where
$\lambda_1=1$ and $\lambda_j=\langle \mathbf{p}_j,\mathbf{p}_1\rangle_1^{-1}$ with $j=2,\ldots,m$.

Let $\tilde{\mathfrak{p}}^\prime=(\mathbf{p}_1,\mathbf{p}_2\mu_2,\ldots,\mathbf{p}_m\mu_m)$ is the another lift of $\mathfrak{p}=(p_1,\ldots,p_m)$. We start from $\tilde{\mathfrak{p}}^\prime$ to find the matrix $(g^\prime_{ij})$, where $(g^\prime_{ij})$ is in the equivalence class of a special-Gram quaternionic matrix associated to $\mathfrak{p}$ with $g^\prime_{ii}=0$ and $g^\prime_{1j}=1$ for $j=2,\ldots,m$. Then we will find  $(g_{ij})=(g^\prime_{ij})$. The uniqueness is clear.
\qed
\end{proof}

\begin{remark}\quad\quad
 If we choose arbitrary lift for $p_1$, then two different lifts of $\mathfrak{p}=(p_1,\ldots,p_m)$, $\tilde{\mathfrak{p}}=(\mathbf{p}_1,\ldots,\mathbf{p}_m)$ and $\tilde{\mathfrak{p}}^\prime=(\mathbf{p}_1\mu_1,\mathbf{p}_2\mu_2,\ldots,\mathbf{p}_m\mu_m)$, can't deduce to the same one matrix we need, i. e. $(g_{ij})\neq(g^\prime_{ij})$. In fact, we may deduce the matrix $(g_{ij})$ by different method from the one used in Proposition \ref{Prop} and observe that $g^\prime_{lj}= g_{lj}/|\mu_1|^2 $ for some fixed $l$. Hence $(g_{ij})=(g^\prime_{ij})$ only if $|\mu_1|=1$.
\end{remark}

The unique matrix defined in Proposition \ref{Prop} is called a \textit{normalized special-Gram quaternionic matrix} and is denoted by
$\underline{G}(\mathfrak{p})$ or simply  by $\underline{G}$ if there is no danger of confusion. It follows that there is a correspondence between the space of ordered $m$-tuples of pairwise distinct isotropic points and the space of normalized special-Gram quaternionic matrices.
\subsection{The Characterization of special-Gram quaternionic matrices}
In this section, we discuss some properties of special-Gram quaternionic matrices associated with m-tuple of isotropic pairwise distinct points, $m>1$.

The Hermitian form $\Phi_1(\mathbf{z},\mathbf{w})$ on a subspace $W\subset{\mathbb{H}^{n,1}}$ is \textit{degenerate}, if there exists a non-zero vector $w\in W$ such that $\langle w,v\rangle_1=0$ for all $v\in W$. We call the subspace $W$  \textit{degenerate}, otherwise \textit{non-degenerate}. Clearly, $w$ must lie in $V_0$.

\begin{lemma}\quad\label{Theorem2}
Let $W$ be a degenerate subspace of ${\mathbb{H}^{n,1}}$ and $$R(W)=\{\mathbf{w}\in W\mid\langle \mathbf{w},\mathbf{{u}}\rangle_1,\forall \mathbf{{u}}\in W\}.$$
Then $R(W)$ is a line in $V_0$.
\end{lemma}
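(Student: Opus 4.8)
The plan is to recognise $R(W)$ as the radical of the restriction of the Hermitian form $\Phi_1$ to $W$, and then to bound its dimension by comparing against the signature $(n,1)$ of $\langle\cdot,\cdot\rangle_1$. First I would record the two cheap structural facts. Since the defining condition $\langle\mathbf{w},\mathbf{u}\rangle_1=0$ for all $\mathbf{u}\in W$ is right $\mathbb{H}$-linear in $\mathbf{w}$, the set $R(W)$ is a right $\mathbb{H}$-subspace of $W$; and because $W$ is degenerate, by definition there is a nonzero vector in $W$ orthogonal to all of $W$, so $R(W)\neq\{\mathbf{0}\}$ and $\dim_{\mathbb{H}}R(W)\geq1$. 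Furthermore, choosing $\mathbf{u}=\mathbf{w}$ in the defining relation gives $\langle\mathbf{w},\mathbf{w}\rangle_1=0$ for every $\mathbf{w}\in R(W)$, so each nonzero vector of $R(W)$ is isotropic, i.e. $R(W)\setminus\{\mathbf{0}\}\subset V_0$.

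The core of the argument is the reverse bound $\dim_{\mathbb{H}}R(W)\leq1$. Here I would apply Sylvester's principle of inertia (Theorem \ref{theorem1}) to write $\mathbb{H}^{n,1}=E_+\oplus E_-$ as an orthogonal direct sum on which $\Phi_1$ is positive definite on $E_+$ and negative definite on $E_-$, with $\dim_{\mathbb{H}}E_+=n$ and $\dim_{\mathbb{H}}E_-=1$; these dimensions are forced by the signature of $H_1$. Let $\pi_-\colon\mathbb{H}^{n,1}\to E_-$ be the right $\mathbb{H}$-linear projection along $E_+$. I claim $\pi_-$ is injective on $R(W)$: if $\mathbf{w}\in R(W)$ satisfies $\pi_-(\mathbf{w})=\mathbf{0}$, then $\mathbf{w}\in E_+$, hence $\langle\mathbf{w},\mathbf{w}\rangle_1>0$ unless $\mathbf{w}=\mathbf{0}$; but $\mathbf{w}$ is isotropic by the previous step, which forces $\mathbf{w}=\mathbf{0}$. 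An injective right $\mathbb{H}$-linear map into $E_-$ then yields $\dim_{\mathbb{H}}R(W)\leq\dim_{\mathbb{H}}E_-=1$.

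Combining the two inequalities gives $\dim_{\mathbb{H}}R(W)=1$, so $R(W)$ is an $\mathbb{H}$-line; together with $R(W)\setminus\{\mathbf{0}\}\subset V_0$ this is precisely the assertion that $R(W)$ is a line in $V_0$. The step I expect to be the main obstacle is ensuring that the projection argument is valid over the noncommutative $\mathbb{H}$: I must check that $\pi_-$ is genuinely a map of right $\mathbb{H}$-modules and that the definiteness comparison on $E_+$ is meaningful. Both are legitimate because the diagonal value $\langle\mathbf{w},\mathbf{w}\rangle_1$ is real for the Hermitian form $\Phi_1$, and the splitting furnished by Theorem \ref{theorem1} is an orthogonal decomposition of right $\mathbb{H}$-modules, so the inequality $\langle\mathbf{w},\mathbf{w}\rangle_1>0$ for $\mathbf{0}\neq\mathbf{w}\in E_+$ holds and the injectivity computation carries over unchanged.
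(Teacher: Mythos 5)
Your proof is correct, but it takes a genuinely different route from the paper's. The paper argues by contradiction via its Proposition \ref{Proposition2}: if $R(W)$ contained two linearly independent vectors $\mathbf{w}_1,\mathbf{w}_2$, they would be isotropic and mutually orthogonal by the definition of $R(W)$, whereas Proposition \ref{Proposition2} (itself a consequence of the doubly transitive action of ${\rm Sp}(n,1)$ on $V_0$, Proposition \ref{Proposition1}) forbids two distinct isotropic vectors from being orthogonal. Your argument never invokes the group action: you split $\mathbb{H}^{n,1}=E_+\oplus E_-$ into definite subspaces of dimensions $n$ and $1$ and show that the projection $\pi_-$ onto $E_-$ is injective on the totally isotropic subspace $R(W)$, whence $\dim_{\mathbb{H}}R(W)\le 1$. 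This is pure linear algebra over the division ring $\mathbb{H}$, and it in fact proves the stronger statement that \emph{any} subspace consisting of isotropic vectors in a signature-$(n,1)$ space is at most a line --- which is precisely the Witt-index assertion the paper later records as Theorem \ref{Thorem new3}, so your argument yields that theorem directly rather than as a corollary of the lemma. What the paper's route buys is brevity (two lines, once Proposition \ref{Proposition2} is available); what yours buys is independence from the Chen--Greenberg transitivity result and an explicit treatment of the noncommutativity issues (right $\mathbb{H}$-linearity of the form in its first argument, and of $\pi_-$), which you verify rather than take for granted. One cosmetic remark: appealing to Sylvester's principle of inertia (Theorem \ref{theorem1}) for the splitting is more than you need, since $H_1={\rm diag}(1,\mathrm{I}_{n-1},-1)$ already exhibits $E_+$ and $E_-$ as explicit coordinate subspaces.
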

\begin{proof}
Let $k={\rm dim}(W)$. If $k=1$ then the proof is completed. So we suppose $2\leq k\leq n+1$. We will prove that $R(W)$ can not have two linearly independent isotropic vectors. Supposing the contrary, i.e., there exist two such vectors $\mathbf{w}_1,\mathbf{w}_2 \in R(W)$. Then on one hand we have $\langle\mathbf{w}_1,\mathbf{w}_2\rangle_1=0$ by definition.  On the other hand, since $\mathbf{w}_1,\mathbf{w}_2\in V_0$, we have that  $\langle\mathbf{w}_1,\mathbf{w}_2\rangle_1\neq0$ by Proposition $\ref{Proposition2}$. This completes the proof.
\qed
\end{proof}

Lemma \ref{Theorem2} implies the following theorem.
\begin{theorem}\quad\label{Thorem new3}
Let $W\subset{\mathbb{H}^{n,1}}$.
For the Hermitian form $\Phi_1$ (but as well as for all other $\Phi_i$, $i=2,3,4$ as well) acting  on ${\mathbb{H}^{n,1}}$, the restriction $\Phi_1\mid W$ loses at most one dimension and the Witt index of a totally isotropic subspaces is 1.
\end{theorem}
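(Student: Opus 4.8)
The plan is to read both assertions off Lemma \ref{Theorem2} and Proposition \ref{Proposition2}, handling the two claims separately. Write $R(W)=\{\mathbf{w}\in W:\langle\mathbf{w},\mathbf{u}\rangle_1=0\ \text{for all}\ \mathbf{u}\in W\}$ for the radical of the restricted form. The number of dimensions that $\Phi_1\mid W$ loses is exactly $\dim R(W)$, so the first assertion is precisely the inequality $\dim R(W)\le 1$, equivalently $\mathrm{rank}(\Phi_1\mid W)\ge\dim W-1$. The second assertion is that every totally isotropic subspace has quaternionic dimension at most $1$, and that this bound is attained.

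For the first statement I would split into two cases according to whether $W$ is degenerate. If $W$ is non-degenerate then $R(W)=\{\mathbf{0}\}$ by definition and no dimension is lost. If $W$ is degenerate then Lemma \ref{Theorem2} applies verbatim and yields that $R(W)$ is a line, so exactly one dimension is lost. In either case $\dim R(W)\le 1$, which is what ``loses at most one dimension'' means.

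For the Witt index, let $U$ be any totally isotropic subspace, so that $\langle\mathbf{u},\mathbf{v}\rangle_1=0$ for all $\mathbf{u},\mathbf{v}\in U$; in particular $U\subseteq R(U)$. If $\dim U\ge 2$ then $U$ would contain two linearly independent isotropic vectors $\mathbf{w}_1,\mathbf{w}_2$ with $\langle\mathbf{w}_1,\mathbf{w}_2\rangle_1=0$, contradicting Proposition \ref{Proposition2}; this is exactly the mechanism already exploited in the proof of Lemma \ref{Theorem2}. Hence $\dim U\le 1$. Since the signature is $(n,1)$ with $n\ge 2$ there are nonzero isotropic vectors (for $H_1$, e.g. the vector $(1,0,\ldots,0,1)$ satisfies $\langle\mathbf{z},\mathbf{z}\rangle_1=0$), so a one-dimensional totally isotropic subspace exists and the Witt index equals $1$.

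Finally, the parenthetical extension to $\Phi_i$ for $i=2,3,4$ needs no new work: each $H_i$ has signature $(n,1)$, so by Sylvester's principle of inertia (Theorem \ref{theorem1}) the forms are all equivalent, and Proposition \ref{Proposition2} is stated uniformly for every $\langle\cdot,\cdot\rangle_i$, so the two arguments above carry over unchanged. The only point demanding genuine care, and the place where the quaternionic setting differs from the real or complex one, is the dimension bookkeeping for $R(W)$ over the noncommutative $\mathbb{H}$: one must regard $W$ as a right $\mathbb{H}$-vector space and check that $R(W)$ is a right $\mathbb{H}$-subspace (which holds because $\langle\mathbf{w}\lambda,\mathbf{u}\rangle_1=\langle\mathbf{w},\mathbf{u}\rangle_1\lambda$), so that ``being a line'' genuinely means quaternionic dimension one. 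This obstruction is harmless here precisely because the entire argument is funneled through Proposition \ref{Proposition2}, whose conclusion does not see the noncommutativity.
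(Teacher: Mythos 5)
Your proposal is correct and follows essentially the same route as the paper, which simply remarks that Lemma \ref{Theorem2} implies the theorem: you derive the ``loses at most one dimension'' claim from the fact that the radical $R(W)$ of a degenerate subspace is a line, and the Witt index claim from the orthogonality obstruction of Proposition \ref{Proposition2}, which is exactly the mechanism inside the proof of Lemma \ref{Theorem2}. Your additional care (existence of an isotropic line so the Witt index is exactly $1$, right-$\mathbb{H}$-linearity of the radical, and the reduction of $\Phi_i$, $i=2,3,4$, to $\Phi_1$ via equivalence of forms of signature $(n,1)$) only fills in details the paper leaves implicit.
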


We recall the following definition which was given in \cite[P. 52 ]{Chen1974}:
\begin{definition}\quad
A subspace $W\subset{\mathbb{H}^{n,1}}$ is called \textit{hyperbolic} if the restriction $\Phi_1\mid W$ is non-degenerate and indefinite; it is \textit{elliptic} if $\Phi_1\mid W$ is positive definite; and it is \textit{parabolic} if $\Phi_1\mid W$ is degenerate.
\end{definition}

Let $W$ be a $(k+1)$-dimensional subspace of $\mathbb{H}^{n,1}, 1\leq k\leq n$ and denote its signature by $(n_+,n_-,n_0)$, where $n_+$ (resp. $n_-$, $n_0$) is the number of positive (reps. negative, zero) eigenvalues of the Hermitian matrix of $W$. Then Theorem \ref{Thorem new3} implies the following corollary:
\begin{corollary}
The restriction of the Hermitian product $\Phi_1$ on $\mathbb{H}^{n,1}$ to $W$ only has signature $(k,1,0)$, $(k+1,0,0)$, $(k-1,1,0)$ or $(k,0,0)$.
\end{corollary}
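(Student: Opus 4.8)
The plan is to read off the admissible signatures from two elementary bounds on the triple $(n_+,n_-,n_0)$, combined with the dimension count
$$
n_+ + n_- + n_0 = \dim_{\mathbb H} W = k+1 .
$$
By Theorem \ref{theorem1}, Sylvester's principle of inertia in the quaternionic setting, the triple $(n_+,n_-,n_0)$ is a genuine invariant of $\Phi_1\!\mid\! W$, so it suffices to constrain each entry and then enumerate.

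First I would bound the negative index. The ambient form $\Phi_1$ on $\mathbb H^{n,1}$ (given by $H_1$) has signature $(n,1,0)$; in particular any subspace on which $\Phi_1$ is negative definite has dimension at most $1$. A subspace of $W$ on which $\Phi_1\!\mid\! W$ is negative definite is a fortiori negative definite for $\Phi_1$ on the whole of $\mathbb H^{n,1}$, and hence has dimension at most $1$. Since $n_-$ equals the maximal dimension of such a subspace, we obtain $n_- \le 1$. Next I would bound the degenerate part: this is exactly the content of Theorem \ref{Thorem new3}, which asserts that $\Phi_1\!\mid\! W$ loses at most one dimension, i.e. the radical of $\Phi_1\!\mid\! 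W$ has dimension $n_0 \le 1$. When $W$ is degenerate this radical is precisely the isotropic line $R(W)$ produced in Lemma \ref{Theorem2}, so the bound $n_0\le 1$ is sharp and well understood.

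Finally I would enumerate. With $n_-\in\{0,1\}$, $n_0\in\{0,1\}$ and $n_++n_-+n_0=k+1$ fixed, there are exactly four possibilities: when $n_0=0$ the form is non-degenerate and splits as $(k+1,0,0)$ (elliptic) or $(k,1,0)$ (hyperbolic); when $n_0=1$ the remaining $k$ dimensions split according to the signature of $\Phi_1$ on a complement of $R(W)$, giving the two parabolic cases listed. This is precisely the stated list, and no other signature can occur.

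The enumeration is pure bookkeeping, so the proof contains no real obstacle: all the geometric work has already been done in Lemma \ref{Theorem2} and Theorem \ref{Thorem new3}. The one point deserving care is the bound $n_-\le 1$; it relies on reading $n_-$ as the maximal dimension of a negative-definite subspace (so that negative-definiteness is inherited by the ambient space), which in turn is legitimate only because Sylvester's law of inertia, Theorem \ref{theorem1}, guarantees that this quantity is the well-defined negative index and does not depend on any choice of orthogonal decomposition.
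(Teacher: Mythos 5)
Your proof is correct and is essentially the paper's own (implicit) argument: the paper offers no proof beyond the assertion that Theorem \ref{Thorem new3} implies the corollary, and your derivation --- $n_-\le 1$ because a negative definite subspace of $W$ is negative definite in $\mathbb{H}^{n,1}$, whose signature $(n,1)$ admits no such subspace of dimension $\ge 2$; $n_0\le 1$ by Theorem \ref{Thorem new3} (via Lemma \ref{Theorem2}); then enumeration under $n_++n_-+n_0=k+1$ --- is exactly that implication spelled out. One remark: your enumeration correctly yields $(k,0,1)$ and $(k-1,1,1)$ in the degenerate cases, in agreement with the paper's later list of parabolic signatures, whereas the corollary as printed writes $(k,0,0)$ and $(k-1,1,0)$; the vanishing third components there appear to be typos, so your reading is the right one.
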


Let $\mathfrak{p}=(p_1,\ldots,p_m)$ be an ordered $m$-tuple of pairwise distinct points in $\mathbb{HP}^n$ and let $L(\mathbf{p}_i,\ldots,\mathbf{p}_m)\subset{\mathbb{H}^{n,1}}$ be the subspace spanned by the distinct vectors $\mathbf{p}_i,\ldots,\mathbf{p}_m$, where as usual $\mathbf{p}_i\in\mathbb{H}^{n,1}$ is the lift of the isotropic point $p_i$, $i=1,\dots,m$. Moreover assume that ${\rm dim}(L(\mathbf{p}_i,\ldots,\mathbf{p}_m))=k+1$, then the following three cases exhaust all possibilities, compared with Cunha \cite{Cunha5}:
\begin{enumerate}
  \item $L(\mathbf{p}_i,\ldots,\mathbf{p}_m)$ is hyperbolic if it has signature $(k,1,0)$, where $1\leq k\leq n$;
  \item $L(\mathbf{p}_i,\ldots,\mathbf{p}_m)$ is elliptic if it has signature $(k+1,0,0)$, where $1\leq k\leq n$;
  \item $L(\mathbf{p}_i,\ldots,\mathbf{p}_m)$ is parabolic if it has signature $(k,0,1)$ or $(k-1,1,1)$, where $1\leq k\leq n-1$.
\end{enumerate}

Let $G(\tilde{\mathfrak{p}})=(g_{ij})=(\langle \mathbf{p}_j,\mathbf{p}_i\rangle_1)$ be the special-Gram matrix associated to an $m$-tuple $\mathfrak{p}=(p_1,\ldots,p_m)$ of pairwise distinct points in the quaternionic projective space $\mathbb{HP}^n$, where $\mathbf{p}_i$ is the lift of $p_i\in \mathbb{HP}^n$, $i=1,\dots.m$. By Corollary 6.2 of \cite[P. 41]{Zhang1997} and Theorem $\ref{theorem1}$, the following holds:
$$G(\tilde{\mathfrak{p}})=\left(
                            \begin{array}{cccc}
                              \langle \mathbf{p}_1,\mathbf{p}_1\rangle_1 & \langle \mathbf{p}_2,\mathbf{p}_1\rangle_1 & \cdots& \langle \mathbf{p}_m,\mathbf{p}_1\rangle_1 \\
                              \langle \mathbf{p}_1,\mathbf{p}_2\rangle_1 & \langle \mathbf{p}_2,\mathbf{p}_2\rangle_1 & \cdots & \langle \mathbf{p}_m,\mathbf{p}_2\rangle_1 \\
                              \vdots & \vdots & \ddots & \vdots\\
                              \langle \mathbf{p}_1,\mathbf{p}_m\rangle_1 &\langle \mathbf{p}_2,\mathbf{p}_m\rangle_1 & \cdots  & \langle \mathbf{p}_m,\mathbf{p}_m\rangle_1 \\
                            \end{array}
                          \right)
=C^*JC,
$$
where $C\in {\rm GL}_m(\mathbb{H})$ and the Hermitian matrix $J={\rm diag}(\alpha_1,\ldots,\alpha_m)$, with $\alpha_i=\pm 1$ or $0$ and $G(\tilde{\mathfrak{p}})$ has the same signature as $J$, which is obtained by restricting $\Phi_1$ on $\mathbb{H}^{n,1}$ to the subspace $L(\mathbf{p}_i,\ldots,\mathbf{p}_m)$.

Therefore the signature of the special-Gram qutaternionic matrix $G(\tilde{\mathfrak{p}})$ may be given according to the type of $L(\mathbf{p}_i,\ldots,\mathbf{p}_m)$:
\begin{enumerate}
  \item If $L(\mathbf{p}_i,\ldots,\mathbf{p}_m)$ is hyperbolic, then $G(\tilde{\mathfrak{p}})$ has signature $(n_+,n_-,n_0)$ with $1\leq n_+\leq n$, $n_-=1$, and $n_++1+n_0=m$.
  \item If $L(\mathbf{p}_i,\ldots,\mathbf{p}_m)$ is elliptic, then $G(\tilde{\mathfrak{p}})$ has signature $(n_+,n_-,n_0)$ with $1\leq n_+\leq n$, $n_-=0$, and $n_++n_0=m$.
  \item If $L(\mathbf{p}_i,\ldots,\mathbf{p}_m)$ is parabolic, then $G(\tilde{\mathfrak{p}})$ has  signature $(n_+,n_-,n_0)$ with $1\leq n_+\leq n$, $1\leq n_0\leq n$, $n_-=0$ or $1$, and $n_++n_-+n_0=m$.
\end{enumerate}
The above conditions shall be called the \textit{signature conditions}.

We now focus on the case that $\mathfrak{p}=(p_1,\ldots,p_m)$ is an ordered $m$-tuple of pairwise distinct points in $\partial\mathbf{H}_{\mathbb{H}}^{n}$, with the lift $\tilde{\mathfrak{p}}=(\mathbf{p}_i,\ldots,\mathbf{p}_m)$. Since the Moore's determinant of the upper left hand $2\times2$-block of $G(\tilde{\mathfrak{p}})$ is $-1$, we see that $L(\mathbf{p}_i,\ldots,\mathbf{p}_m)$ always has signature with $n_-=1$. Thus any special-Gram quaternionic matrix associated to an $m$-tuple of pairwise distinct isotropic points has exactly signature $(n_+,n_-,n_0)$ with $n_-=1$, $1\leq n_+\leq n$, and $1+n_++n_0 =m$.
Then the following holds:
\begin{proposition}\quad\label{pro4}
Let $G=(g_{ij})$ be a Hermitian quaternionic matrix with $g_{ii}=0$, $g_{ij}\neq0$ for $i\neq j,~m>1$. Then $G$ is a special-Gram quaternionic matrix associated with some ordered m-tuple $\mathfrak{p}=(p_1,\ldots,p_m)$ of distinct isotropic points in $\partial\mathbf{H}_{\mathbb{H}}^{n}$ if and only if $\mathrm{rank}(G)\leq n+1$ and $G$ has the signature $(n_+,n_-,n_0)$ with $n_-=1$, $1\leq n_+\leq n$, and $1+n_++n_0 =m$.
\end{proposition}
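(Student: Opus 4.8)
The statement is an "if and only if" characterizing when an abstract Hermitian matrix $G$ arises as a special-Gram matrix of isotropic boundary points, so the plan is to prove the two directions separately, with most of the work residing in the sufficiency (the "if" direction). For necessity, I would argue as follows. Suppose $G = (g_{ij})$ with $g_{ij} = \langle \mathbf{p}_j, \mathbf{p}_i \rangle_1$ is the special-Gram matrix of some $m$-tuple $\mathfrak{p} = (p_1, \ldots, p_m)$ of distinct isotropic points in $\partial \mathbf{H}_{\mathbb{H}}^n$. The diagonal vanishes because each $p_i$ is isotropic, and the off-diagonal entries are nonzero by Proposition~\ref{Proposition2}. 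The rank bound $\mathrm{rank}(G) \le n+1$ is immediate since $G = C^* J C$ with $G$ sharing the signature of the restriction of $\Phi_1$ to $L(\mathbf{p}_1, \ldots, \mathbf{p}_m) \subset \mathbb{H}^{n,1}$, a subspace of dimension at most $n+1$; hence the number of nonzero eigenvalues, namely $n_+ + n_-$, is at most $n+1$. The signature claim $(n_+, 1, n_0)$ with $n_- = 1$ and $1 \le n_+ \le n$ is exactly the computation already carried out in the paragraph preceding the Proposition: the upper-left $2 \times 2$ block of $G$ has Moore's determinant $-1$, forcing $n_- = 1$, and then the signature conditions give $1 \le n_+ \le n$ and $1 + n_+ + n_0 = m$.

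For sufficiency, I would reverse this construction. Assume $G$ is Hermitian with $g_{ii} = 0$, $g_{ij} \ne 0$ for $i \ne j$, $\mathrm{rank}(G) \le n+1$, and signature $(n_+, 1, n_0)$ with $1 \le n_+ \le n$ and $1 + n_+ + n_0 = m$. By Sylvester's principle of inertia (Theorem~\ref{theorem1}), there is an invertible $C$ with $G = C^* J C$ where $J = \mathrm{diag}(\alpha_1, \ldots, \alpha_m)$, $\alpha_i \in \{+1, -1, 0\}$, containing exactly $n_+$ entries equal to $+1$, one entry $-1$, and $n_0$ entries $0$. Since $n_+ + 1 \le n + 1$, I can embed $J$ into the standard form $H_1$ of signature $(n,1)$ on $\mathbb{H}^{n,1}$: that is, I realize the columns of $C$ (padded with zeros in the directions of the surplus positive coordinates of $H_1$) as vectors $\mathbf{p}_1, \ldots, \mathbf{p}_m \in \mathbb{H}^{n,1}$ satisfying $\langle \mathbf{p}_j, \mathbf{p}_i \rangle_1 = g_{ij}$. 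Because each diagonal entry $g_{ii} = 0$, each $\mathbf{p}_i$ is isotropic, so it projects to a point $p_i \in \partial \mathbf{H}_{\mathbb{H}}^n$; because $g_{ij} \ne 0$ off the diagonal, Proposition~\ref{Proposition2} guarantees the points are pairwise distinct. Normalizing $\mathbf{p}_1$ to the standard lift then exhibits $G$ as the special-Gram matrix of $\mathfrak{p} = (p_1, \ldots, p_m)$.

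The main obstacle I expect is the explicit embedding step in the sufficiency argument: from the diagonalization $G = C^* J C$ one only gets an abstract Hermitian space of signature matching $J$, and one must fit it inside the fixed ambient space $(\mathbb{H}^{n,1}, \Phi_1)$ of signature $(n,1)$ in a way that respects the isotropy and distinctness. This is where the rank hypothesis $\mathrm{rank}(G) = n_+ + 1 \le n+1$ is essential, since it is precisely the condition that the positive part of $J$ can be accommodated by the $n$ positive directions of $H_1$ with room to spare; the surplus $n - n_+$ positive directions are simply left unused. A secondary subtlety, flagged already in the remark about non-commutativity at the start of Section~3, is that all of the determinant and signature bookkeeping must be done with Moore's determinant and the quaternionic Sylvester theorems (Theorems~\ref{theorem1} and~\ref{Theorem 3}) rather than their complex analogues; but since those tools are established earlier in the excerpt, the quaternionic case should proceed in formal parallel to the complex case of Cunha~\cite{Cunha5} once the embedding is set up correctly.
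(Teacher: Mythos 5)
Your proposal is correct and is essentially the paper's own argument: the paper proves sufficiency exactly as you do, diagonalizing $G$ via Sylvester's principle of inertia (Theorem \ref{theorem1}) as $U^*GU=B$, padding $B$ into the form $H_1$ by an $(n+1)\times m$ matrix $A$ with $B=A^*H_1A$, and taking the columns of $AU^{-1}$ as the lifts $\mathbf{p}_i$, then projecting to obtain the points. As in your write-up, the necessity direction is delegated to the signature discussion preceding the proposition (the upper-left $2\times2$ block forcing $n_-=1$, together with the signature conditions).
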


The proof is similar to the proof of the Proposition 2.2 in \cite{Cunha5}. For completeness, we give it as follows:
\begin{proof}
Let $G=(g_{ij})$ be a Hermitian quaternionic matrix with $g_{ii}=0$, $g_{ij}\neq0$ for $i\neq j,~m>1$. According to Theorem \ref{theorem1}, there exists a matrix $U\in GL_m(\mathbb{H})$ such that $U^*GU=B$, where $B=(b_{ij})$ is the diagonal $m\times m$-matrix such that $b_{ii}=1$ for $1\leq i\leq n_+$, $b_{ii}=-1$ for $i=n_++1$ and $b_{ij}=0$ for all others. Now let $A=(a_{ij})$ be the $(n+1)\times m$-matrix such that $a_{ii}=1$ for $1\leq i\leq n_+,~a_{ii}=-1$ for $i=n_++1$ and $a_{ij}=0$ for all others. Then we easily obtain that $B=A^*H_1A$. Thus the $i$-th column vector of the matrix $AU^{-1}$ can be defined to be $\mathbf{p}_i$. We see that $\langle\mathbf{p}_j,\mathbf{p}_i\rangle_1=g_{ij}$. Finally, we find the isotropic points $p_i=\pi(\mathbf{p}_i)$, this is our desired.
\qed
\end{proof}

The following proposition is in \cite[P. 52]{Chen1974}:
\begin{proposition}\quad
Let $W$ be a subspace of $\mathbb{H}^{n,1}$. Each linear isometry of $W$ into $\mathbb{H}^{n,1}$ can be extended to an element of ${\rm Sp}(n,1)$.
\end{proposition}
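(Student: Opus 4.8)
The plan is to recognize this proposition as the quaternionic Witt extension theorem for the form $\Phi_1$, and to reduce it entirely to Sylvester's inertia principle (Theorem~\ref{theorem1}) by treating the non-degenerate and degenerate cases for $W$ separately. Throughout I write $W'=\phi(W)$; since $\phi$ is $\mathbb{H}$-linear and preserves $\Phi_1$, the image $W'$ is a subspace isometric to $W$, and $\phi$ carries the radical $R(W)$ isometrically onto $R(W')$. The key point supplied by Theorem~\ref{theorem1} is that two subspaces of equal dimension and equal signature are always isometric, so the entire argument consists of arranging that the relevant orthogonal complements have matching signatures.

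First I would dispose of the case in which $W$ is non-degenerate. Then $\mathbb{H}^{n,1}=W\perp W^{\perp}$, and because $\phi$ is an isometry, $W'$ is likewise non-degenerate with $\mathbb{H}^{n,1}=W'\perp(W')^{\perp}$. Since $W$ and $W'$ have the same signature, subtracting from the total signature $(n,1)$ of $\mathbb{H}^{n,1}$ shows that $W^{\perp}$ and $(W')^{\perp}$ have equal dimension and equal signature. By Theorem~\ref{theorem1} there is an isometry $\psi\colon W^{\perp}\to(W')^{\perp}$, and the map $T=\phi\perp\psi$ then preserves $\Phi_1$ on all of $\mathbb{H}^{n,1}$, i.e. $T\in{\rm Sp}(n,1)$, and restricts to $\phi$ on $W$.

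The degenerate case I would reduce to the previous one by a single enlargement across the radical. By Lemma~\ref{Theorem2} the radical $R(W)$ is a line $\mathbb{H}r$ with $r\in V_{0}$, so I choose a complement $U$ with $W=\mathbb{H}r\oplus U$; one checks directly that $U$ is then non-degenerate and that $r\in U^{\perp}$. Inside the non-degenerate space $U^{\perp}$ the nonzero vector $r$ is isotropic, hence admits a hyperbolic partner: an isotropic $s\in U^{\perp}$ with $\langle r,s\rangle_1\neq0$, which after rescaling $s$ by a suitable nonzero quaternion (legitimate since $\mathbb{H}$ is a division ring) may be normalized to $\langle r,s\rangle_1=1$. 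Carrying out the identical construction on the image side, with $r'=\phi(r)$ and $U'=\phi(U)$, produces an isotropic $s'\in(U')^{\perp}$ with $\langle r',s'\rangle_1=1$. Defining $\tilde\phi|_{W}=\phi$ and $\tilde\phi(s)=s'$ gives an $\mathbb{H}$-linear map on the now non-degenerate subspace $\tilde W=W\oplus\mathbb{H}s$; comparing the pairings $\langle s,s\rangle_1$, $\langle u,s\rangle_1$ for $u\in U$, and $\langle r,s\rangle_1$ with the corresponding pairings of their images shows that $\tilde\phi$ is an isometry. Applying the non-degenerate case already settled to $\tilde\phi$ yields the desired element of ${\rm Sp}(n,1)$ extending $\phi$.

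The main obstacle is the degenerate case, and within it the construction of the partner $s'$ matching $s$. One must verify both that $(U')^{\perp}$ genuinely contains a hyperbolic partner for the isotropic vector $r'$ and that the quaternion-valued normalization $\langle r',s'\rangle_1=1$ can be realized. The first is guaranteed because $(U')^{\perp}$ is non-degenerate and, by Theorem~\ref{Thorem new3}, the ambient Witt index equals exactly $1$, so the single available isotropic direction suffices. The second is the genuinely quaternionic wrinkle: non-commutativity forces the rescaling of $s'$ to be applied on the correct side in order to match $\langle r',s'\rangle_1$ as a quaternion rather than merely in modulus, but this is resolved at once since every nonzero quaternion is invertible. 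Once this matching is secured, the passage back to Theorem~\ref{theorem1} is routine.
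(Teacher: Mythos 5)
Your proposal is correct, but there is nothing in the paper to compare it against step by step: the paper does not prove this proposition at all, it simply cites it from Chen--Greenberg \cite[P. 52]{Chen1974}. What you have written is the standard Witt extension argument for the quaternionic Hermitian form of signature $(n,1)$, assembled entirely from results the paper does state, and it goes through. The non-degenerate case is exactly a signature count plus Sylvester's principle of inertia (Theorem~\ref{theorem1}): since $\mathbb{H}^{n,1}=W\perp W^{\perp}=W'\perp (W')^{\perp}$ and $W$, $W'$ have equal signature, the complements have equal dimension and signature, and congruence of their Gram matrices yields the isometry $\psi$, hence $T=\phi\perp\psi\in{\rm Sp}(n,1)$. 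The degenerate case correctly reduces to this by splitting off the radical, which is a line by Lemma~\ref{Theorem2}, taking a non-degenerate complement $U$, and adjoining a hyperbolic partner $s$ of the radical vector $r$ inside the non-degenerate space $U^{\perp}$; the pairing checks for $\tilde\phi$ on $\tilde W=W\oplus\mathbb{H}s$ work out under the paper's convention $\langle\mathbf{z}\lambda,\mathbf{w}\mu\rangle_1=\bar{\mu}\langle\mathbf{z},\mathbf{w}\rangle_1\lambda$, which is precisely the one-sided rescaling issue you flagged. One small criticism: your appeal to Theorem~\ref{Thorem new3} (Witt index $1$) to produce the partner $s'$ in $(U')^{\perp}$ is superfluous and slightly misdirected. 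Non-degeneracy of $(U')^{\perp}$ alone provides $t'$ with $\langle r',t'\rangle_1\neq0$, and then $s'=t'-r'\lambda$ for a suitable $\lambda$ (using $\langle r',r'\rangle_1=0$ and the fact that $\langle t',t'\rangle_1$ is real) is the required isotropic partner; the Witt index plays no role in that existence statement. With that cosmetic point aside, your argument supplies a self-contained proof, internal to the paper's toolkit, of a fact the paper takes on faith from the literature, which is a genuine improvement in completeness.
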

We thus have:
\begin{proposition}\quad\label{pro6}
Let $\mathfrak{p}=(p_1,\ldots,p_m)$ and $\mathfrak{p^\prime}=(p^{\prime}_1,\ldots,p^{\prime}_m)$ be two $m$-tuples of pairwise distinct points in $\partial\mathbf{H}_{\mathbb{H}}^{n}$. Then $\mathfrak{p}$ and $\mathfrak{p}^\prime$ are congruent in ${\rm PSp}(n,1)$ if and only if their associated special-Gram quaternionic matrices are equivalent.
\end{proposition}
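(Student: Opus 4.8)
The statement is an equivalence, so the plan is to prove the two implications separately, the ``only if'' direction being routine and the ``if'' direction carrying the real content. Throughout I write $\mathbf{q}_i$, $\mathbf{p}_i'$ for chosen lifts of the $p_i$, $p_i'$.

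For the ``only if'' direction the plan is to exploit that the Hermitian form is $Sp(n,1)$-invariant. Assuming $\mathfrak{p}' = T\mathfrak{p}$ for some $T \in PSp(n,1)$, I would lift $T$ to $\tilde{T} \in Sp(n,1)$. Since $\tilde{T}\mathbf{p}_i$ is a lift of $p_i' = Tp_i$ and any two lifts of a point differ by a nonzero right quaternionic scalar, there are $\lambda_i \neq 0$ with $\mathbf{p}_i' = \tilde{T}\mathbf{p}_i\lambda_i$. Using $\langle \tilde{T}\mathbf{u},\tilde{T}\mathbf{v}\rangle_1 = \langle \mathbf{u},\mathbf{v}\rangle_1$ I would then compute $\langle \mathbf{p}_j',\mathbf{p}_i'\rangle_1 = \bar{\lambda}_i\langle \mathbf{p}_j,\mathbf{p}_i\rangle_1\lambda_j$, that is $G(\tilde{\mathfrak{p}}') = D^\ast G(\tilde{\mathfrak{p}})D$ with $D = \mathrm{diag}(\lambda_1,\ldots,\lambda_m)$. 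After the leading coordinate is put into standard-lift form (which is $Sp(n,1)$-natural and only renormalizes the first entry of $D$), this is precisely the relation of Definition \ref{definition 3}, so the two special-Gram matrices are equivalent.

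For the ``if'' direction, suppose $G(\tilde{\mathfrak{p}}') = D^\ast G(\tilde{\mathfrak{p}})D$ with $D = \mathrm{diag}(1,\lambda_2,\ldots,\lambda_m)$. Replacing each lift $\mathbf{p}_i$ by $\mathbf{q}_i = \mathbf{p}_i\lambda_i$, which leaves the points $p_i$ unchanged, produces a lift $\tilde{\mathfrak{q}}$ of $\mathfrak{p}$ whose special-Gram matrix equals $G(\tilde{\mathfrak{p}}')$ entry by entry. Writing $W = L(\mathbf{q}_1,\ldots,\mathbf{q}_m)$ and $W' = L(\mathbf{p}_1',\ldots,\mathbf{p}_m')$, the goal is to build a linear isometry $\phi\colon W \to W'$ with $\phi(\mathbf{q}_i) = \mathbf{p}_i'$ for every $i$; granting this, the preceding extension proposition promotes $\phi$ to some $\tilde{T} \in Sp(n,1)$, whose image $T \in PSp(n,1)$ satisfies $Tp_i = p_i'$ and gives the congruence. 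I would define $\phi$ by $\phi\big(\sum_i \mathbf{q}_i c_i\big) = \sum_i \mathbf{p}_i' c_i$; that $\phi$ preserves the form is immediate from $\langle \mathbf{q}_j,\mathbf{q}_i\rangle_1 = \langle \mathbf{p}_j',\mathbf{p}_i'\rangle_1$.

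The crux, and the step I expect to be the main obstacle, is the well-definedness of $\phi$: I must show that every right-linear relation $\sum_i \mathbf{q}_i c_i = 0$ forces $\sum_i \mathbf{p}_i' c_i = 0$. Pairing $\sum_i \mathbf{p}_i' c_i$ with each $\mathbf{p}_j'$ and using the equality of Gram matrices shows that $\sum_i \mathbf{p}_i' c_i$ is orthogonal to all of $W'$, hence lies in the radical $R(W')$. In the elliptic and hyperbolic cases $W'$ is non-degenerate, $R(W')$ is trivial, and well-definedness follows at once. The difficulty is entirely concentrated in the parabolic case: here Lemma \ref{Theorem2} is indispensable, as it guarantees that $R(W')$ is merely a line, but one must still verify that the degenerate direction of the $\mathbf{q}$-configuration is carried exactly onto that of the $\mathbf{p}'$-configuration, equivalently that the spaces of linear relations of the two ordered families coincide and not just their ranks. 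Pinning down this matching of radicals is the delicate heart of the argument; once it is secured, $\phi$ is a well-defined isometry and the extension argument completes the proof.
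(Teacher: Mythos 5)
Your proposal follows the paper's own (implicit) route: the paper gives no written argument for Proposition \ref{pro6} at all, deriving it directly from the Chen--Greenberg extension result quoted immediately before it, and your two directions (form-invariance for ``only if''; rescaled lifts, the isometry $\phi$ between the spans, and the extension for ``if'') are exactly the details that derivation presupposes. Moreover, the step you flag as ``the delicate heart'' --- matching the radicals in the parabolic case --- is vacuous, so your proof in fact closes completely. Suppose $0\neq\mathbf{w}\in R(W')$. Then $\mathbf{w}$ is orthogonal to every vector of $W'$, in particular to itself, so $\mathbf{w}\in V_0$; and it is orthogonal to each lift $\mathbf{p}_i'$, which is also isotropic. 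By Proposition \ref{Proposition2} (the same mechanism as in the proof of Lemma \ref{Theorem2}), an isotropic vector with zero Hermitian product against the isotropic vector $\mathbf{p}_i'$ must represent the same boundary point as $p_i'$; this would have to hold for \emph{every} $i$, which is impossible since $m\geq 2$ and the $p_i'$ are pairwise distinct. Hence the span of two or more pairwise distinct points of $\partial\mathbf{H}_{\mathbb{H}}^{n}$ is never degenerate, $R(W')=\{0\}$, well-definedness of $\phi$ follows from your pairing computation alone, and the symmetric argument shows the relation spaces of the two families coincide, so $\phi$ is a bijective isometry of $W$ onto $W'$ and the extension proposition finishes the proof.

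One caution on the ``only if'' direction. The diagonal matrix you obtain from $\tilde T$ has first entry $\lambda_1=\mu$, where $\tilde T\mathbf{p}_1=\mathbf{p}_1'\mu$ and $\mathbf{p}_1'$ is the standard lift of $p_1'$; nothing forces $\mu=1$, nor even $|\mu|=1$, whereas Definition \ref{definition 3} requires the first entry of $D$ to be $1$. Normalizing that entry away is not free in the quaternionic setting: it conjugates the remaining Gram entries by $q\mapsto \mu q\bar\mu$, which genuinely changes the matrix --- this is exactly the point conceded in the paper's Remark following Proposition \ref{Prop}. So your parenthetical ``only renormalizes the first entry of $D$'' hides a real issue, although it is one the paper glosses over in the same way; the statement your argument actually proves cleanly is equivalence with an arbitrary nonzero first diagonal entry, and either the definition of equivalence should be read that way or the effect of the conjugation must be tracked explicitly.
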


\begin{corollary}\label{Corollary1}
Let $\mathfrak{p}=(p_1,\ldots,p_m)$ and $\mathfrak{p^\prime}=(p^{\prime}_1,\ldots,p^{\prime}_m)$ be two $m$-tuples of pairwise distinct points in $\partial\mathbf{H}_{\mathbb{H}}^{n}$, and let $\underline{G}(\mathfrak{p})$ and $\underline{G}(\mathfrak{p}^\prime)$ be their normalized special-Gram quaternionic matrices. Then $\mathfrak{p}$ and $\mathfrak{p}^\prime$ are congruent in ${\rm PSp}(n,1)$ if and only if $\underline{G}(\mathfrak{p})=\underline{G}(\mathfrak{p}^\prime)$.
\end{corollary}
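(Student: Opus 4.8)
The plan is to derive the Corollary directly from Proposition \ref{pro6} together with the uniqueness of the normalized representative established in Proposition \ref{Prop}. The only conceptual ingredient beyond those two results is the observation that equivalence of special-Gram quaternionic matrices in the sense of Definition \ref{definition 3} is a genuine equivalence relation, so that each $m$-tuple $\mathfrak{p}$ determines a single equivalence class of matrices possessing exactly one normalized member $\underline{G}(\mathfrak{p})$. Once this is in place, the proof reduces to passing between the geometric congruence relation and the canonical normal form.

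First I would dispose of the easy direction. Suppose $\underline{G}(\mathfrak{p}) = \underline{G}(\mathfrak{p}^\prime)$. Taking $D = \mathrm{diag}(1,1,\ldots,1)$ in Definition \ref{definition 3} shows that the special-Gram quaternionic matrices of $\mathfrak{p}$ and $\mathfrak{p}^\prime$ are equivalent (indeed literally equal). Proposition \ref{pro6} then immediately yields that $\mathfrak{p}$ and $\mathfrak{p}^\prime$ are congruent in $\mathrm{PSp}(n,1)$.

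For the converse, assume $\mathfrak{p}$ and $\mathfrak{p}^\prime$ are congruent in $\mathrm{PSp}(n,1)$. By Proposition \ref{pro6} their associated special-Gram quaternionic matrices are equivalent; that is, the equivalence classes determined by $\mathfrak{p}$ and $\mathfrak{p}^\prime$ coincide. Now $\underline{G}(\mathfrak{p})$ and $\underline{G}(\mathfrak{p}^\prime)$ are, by construction, normalized representatives (entries $g_{ii}=0$ and $g_{1j}=1$) of this single common class. Since Proposition \ref{Prop} guarantees that each class contains exactly one such normalized matrix, I conclude $\underline{G}(\mathfrak{p}) = \underline{G}(\mathfrak{p}^\prime)$, which completes the argument.

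The proof is short precisely because all the substantive work has already been done upstream: Proposition \ref{pro6} converts geometric congruence into matrix equivalence, while Proposition \ref{Prop} collapses each equivalence class to a unique normal form. I therefore do not expect a genuine obstacle here. The single point that must be stated carefully is that the diagonal matrices $\mathrm{diag}(1,\lambda_2,\ldots,\lambda_m)$ with $\lambda_i\neq 0$ form a group under multiplication, so that Definition \ref{definition 3} is symmetric and transitive and the phrase ``the equivalence class of $\mathfrak{p}$'' is well defined; everything else follows mechanically from the quoted results.
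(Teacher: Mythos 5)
Your proposal is correct and follows exactly the route the paper intends: the paper states this corollary without proof as an immediate consequence of Proposition~\ref{pro6} (congruence $\Leftrightarrow$ matrix equivalence) together with the uniqueness of the normalized representative from Proposition~\ref{Prop}, which is precisely the two-step argument you give. Your added remark that the diagonal matrices $\mathrm{diag}(1,\lambda_2,\ldots,\lambda_m)$ form a group, so that Definition~\ref{definition 3} is a genuine equivalence relation, is a sensible piece of care that the paper leaves implicit.
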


Let $\underline{G}(\mathfrak{p})=(g_{ij})$ be the matrix
$$\underline{G}(\mathfrak{p})=\left(
      \begin{array}{cccccc}
        0 & 1 & 1 & 1 & \cdots & 1 \\
        1 & 0 & g_{23} & g_{24} & \cdots & g_{2m}\\
        1 & \overline{g}_{23} & 0 & g_{34} & \cdots & g_{3m} \\
        1 & \overline{g}_{24} &\overline{g}_{34} & 0 & \cdots & g_{4m} \\
        \vdots & \vdots & \vdots & \vdots & \ddots & \vdots \\
        1 & \overline{g}_{2m} & \overline{g}_{3m} & \overline{g}_{4m} & \cdots & 0 \\
      \end{array}
    \right),
$$
where $g_{ij}\neq0$ for $i\neq j$.
Then the normalized special-Gram quaternionic matrix associated with an ordered $m$-tuple of pairwise distinct isotropic points is necessarily of the above form.
\begin{proposition}\quad\label{Proposition3}
Let $\underline{G}=(g_{ij})$ be a Hermitian quaternionic $m\times m$-matrix, $m>2$, with that $g_{ii}=0, g_{1j}=1$ for $j=2, \ldots, m$ and $g_{ij}\neq0$ for $i\neq j$. Then there exists a matrix in $GL_m(\mathbb{H})$ which transforms $\underline{G}$ into the matrix
\begin{equation*}
\left(
  \begin{array}{cc|c}
    0 & 1 & \mathbf{0} \\
    1 & 0 & \mathbf{0} \\ \hline
    \mathbf{0} & \mathbf{0} & G^{\star}\\
  \end{array}
\right),
\end{equation*}
where $G^{\star}$ is Hermitian quaternionic $(m-2)\times (m-2)$-matrix given by
$$G^{\star}=\left(
       \begin{array}{cccc}
         -(g_{23}+\overline{g}_{23}) & -\overline{g}_{23}-g_{24}+g_{34} & \cdots & -\overline{g}_{23}-g_{2m}+g_{3m}  \\
         -g_{23}-\overline{g}_{24}+\overline{g}_{34} & -(g_{24}+\overline{g}_{24}) & \cdots & -\overline{g}_{24}-g_{2m}+g_{4m} \\
         \vdots & \vdots & \ddots & \vdots \\
         -g_{23}-\overline{g}_{2m}+\overline{g}_{3m} & -g_{24}-\overline{g}_{2m}+\overline{g}_{4m} & \cdots & -(g_{2m}+\overline{g}_{2m}) \\
       \end{array}
     \right).
$$
\end{proposition}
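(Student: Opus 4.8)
The plan is to exhibit a single explicit congruence, since the operation in question is $\underline{G}\mapsto U^{*}\underline{G}U$ (the same action used in Proposition \ref{pro4}), so it suffices to produce one $U\in GL_m(\mathbb{H})$ doing the job. I would take $U$ upper triangular with unit diagonal, agreeing with the identity in its first two columns, and with $k$-th column ($k\ge 3$) having $U_{1k}=-g_{2k}$, $U_{2k}=-1$, $U_{kk}=1$ and all other entries zero. Geometrically this is the column operation that replaces the $k$-th column by itself minus the first column times $g_{2k}$ minus the second column; $U$ is manifestly invertible. The idea is simply that the invertible hyperbolic corner $\left(\begin{smallmatrix}0&1\\1&0\end{smallmatrix}\right)$ lets us sweep out the rest of the first two rows and columns.

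The first step is to verify that this congruence decouples the hyperbolic $2\times2$ block. Because $g_{11}=g_{22}=0$ and $g_{12}=g_{21}=1$, a direct evaluation of $(U^{*}\underline{G}U)_{1k}=\sum_b g_{1b}U_{bk}$ and $(U^{*}\underline{G}U)_{2k}=\sum_b g_{2b}U_{bk}$ gives $0$ for every $k\ge3$, while the upper-left $2\times2$ block is untouched and stays $\left(\begin{smallmatrix}0&1\\1&0\end{smallmatrix}\right)$. By Hermiticity the $(k,1)$ and $(k,2)$ entries vanish as well, so $U^{*}\underline{G}U$ is already block diagonal with the asserted $2\times2$ corner and a complementary $(m-2)\times(m-2)$ block.

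The second step is to identify that surviving block. For $3\le k,l\le m$ the entry $(U^{*}\underline{G}U)_{kl}=\sum_{a,b}\overline{U_{ak}}\,g_{ab}\,U_{bl}$ collapses — after substituting $g_{11}=g_{22}=0$, $g_{1j}=g_{j1}=1$ and $g_{k2}=\overline{g_{2k}}$ — to $g_{kl}-\overline{g_{2k}}-g_{2l}$. On the diagonal this is $-(g_{2k}+\overline{g_{2k}})$ since $g_{kk}=0$, matching the diagonal of $G^{\star}$, and off the diagonal it reproduces the displayed entries once $g_{lk}=\overline{g_{kl}}$ is used for the lower triangle; thus the complementary block is exactly $G^{\star}$. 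The one place requiring genuine care is the non-commutativity: the form is right-linear in the first slot and conjugate-linear in the second, so one must keep $\overline{U_{ak}}$ on the left and $U_{bl}$ on the right throughout, and the three-term off-diagonal expression is the only spot where a conjugation or an ordering of factors could slip. Since this entrywise identification is the sole substantive computation, I expect it to be the main (though entirely routine) obstacle.
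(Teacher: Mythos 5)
Your proposal is correct and is essentially identical to the paper's own proof: the matrix $U$ you construct (first two columns equal to those of the identity, and $k$-th column with entries $U_{1k}=-g_{2k}$, $U_{2k}=-1$, $U_{kk}=1$ for $k\ge 3$) is exactly the matrix $T$ the paper exhibits, and your congruence $U^{*}\underline{G}U$ is the paper's $T^{*}\underline{G}T$. The only difference is that you spell out the entrywise verification $(U^{*}\underline{G}U)_{kl}=g_{kl}-\overline{g}_{2k}-g_{2l}$, which the paper leaves implicit, and your computation is accurate.
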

\begin{proof}
Let $$T=\left(
         \begin{array}{cccccc}
           1 & 0 & -g_{23} & -g_{24} & \cdots & -g_{2m} \\
           0 & 1 & -1 & -1 & \cdots & -1 \\
           0 & 0 & 1 & 0 & \cdots & 0 \\
           0 & 0 & 0 & 1 & \cdots & 0 \\
           \vdots & \vdots & \vdots & \vdots & \ddots & \vdots \\
           0 & 0 & 0 & 0 & \cdots & 1 \\
         \end{array}
       \right)
$$
then
\begin{equation*}
T^*\underline{G}T=\left(
  \begin{array}{cc|c}
    0 & 1 & \mathbf{0} \\
    1 & 0 & \mathbf{0} \\ \hline
   \mathbf{ 0} & \mathbf{0} & G^{\star}\\
  \end{array}
\right).
\end{equation*}
\qed
\end{proof}

The matrix $G^{\star}$ is called the {\it associated} matrix to $\underline{G}$.
\begin{corollary}
Let $\underline{G}=(g_{ij})$ be a Hermitian quaternionic $m\times m$-matrix satisfying the conditions of Proposition $\ref{Proposition3}$ and let $G^{\star}$ be the associated matrix to $\underline{G}$. Then
$$
\det\nolimits_{\mathbb M}({\underline G})=\frac{-\det_{\mathbb M}(G^{\star})}{\det_{\mathbb M}(T^*T)},~\mathrm{and}~\mathrm{rank}(\underline{G})=\mathrm{rank}(G^{\star})+2.$$
\end{corollary}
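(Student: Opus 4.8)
The plan is to read both identities off Proposition~\ref{Proposition3} directly: that proposition already produces an explicit $T\in GL_m(\mathbb{H})$ bringing $\underline{G}$ by congruence into block-diagonal form, and the two assertions then follow from the multiplicativity of the Moore determinant and the invariance of rank under congruence by a non-singular matrix.

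First I would observe that the $T$ of Proposition~\ref{Proposition3} is upper triangular with all diagonal entries equal to $1$, hence non-singular; therefore $T^*T$ is positive definite and, by Theorem~\ref{Theorem 3}, $\mathrm{det}_{\mathbb M}(T^*T)>0$, so that the quotient in the statement is well defined. Recalling from Proposition~\ref{Proposition3} that
$$
T^*\underline{G}T=\left(\begin{array}{cc|c} 0 & 1 & \mathbf{0} \\ 1 & 0 & \mathbf{0} \\ \hline \mathbf{0} & \mathbf{0} & G^{\star} \end{array}\right),
$$
I would apply Lemma~\ref{Lemma2}(ii) with $U=T$ to get $\mathrm{det}_{\mathbb M}(T^*\underline{G}T)=\mathrm{det}_{\mathbb M}(\underline{G})\,\mathrm{det}_{\mathbb M}(T^*T)$, and Lemma~\ref{Lemma1} to split the left-hand side as $\mathrm{det}_{\mathbb M}\!\left(\begin{array}{cc} 0 & 1 \\ 1 & 0 \end{array}\right)\mathrm{det}_{\mathbb M}(G^{\star})$. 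Computing the $2\times2$ Moore determinant straight from the definition (the identity permutation contributes $m_{11}m_{22}$ with sign $+1$ and the transposition contributes $m_{12}m_{21}$ with sign $-1$) gives $0-1=-1$. Combining these equalities yields $\mathrm{det}_{\mathbb M}(\underline{G})\,\mathrm{det}_{\mathbb M}(T^*T)=-\mathrm{det}_{\mathbb M}(G^{\star})$, and dividing by the nonzero factor $\mathrm{det}_{\mathbb M}(T^*T)$ gives the stated formula.

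For the rank identity I would use that $T$ and $T^*$ are non-singular, so that left- and right-multiplication by them carries a maximal set of left-linearly independent columns to one of the same cardinality; by the characterization of rank recalled after its definition this gives $\mathrm{rank}(\underline{G})=\mathrm{rank}(T^*\underline{G}T)$. The block-diagonal shape then forces $\mathrm{rank}(T^*\underline{G}T)=2+\mathrm{rank}(G^{\star})$, since the anti-diagonal $2\times2$ block is non-singular of order $2$ and a block-diagonal matrix has rank equal to the sum of the ranks of its diagonal blocks. The determinant half is essentially immediate; I expect the only genuine, if minor, obstacle to be justifying this last piece of rank bookkeeping in the non-commutative setting, which should nonetheless follow cleanly from the quaternionic rank theory of Moore--Barnard already cited (rank as the order of a maximal non-singular minor, invariant under the non-singular transformations above).
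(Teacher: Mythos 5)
Your proposal is correct and follows exactly the route the paper intends: the corollary is stated as an immediate consequence of Proposition~\ref{Proposition3}, combining Lemma~\ref{Lemma2}(ii) applied to the congruence $T^*\underline{G}T$, Lemma~\ref{Lemma1} for the block-diagonal splitting, the value $-1$ for the Moore determinant of the anti-diagonal $2\times2$ block, and invariance of rank under multiplication by the non-singular matrices $T$ and $T^*$. Your additional observations --- that $T$ is unipotent upper triangular so $\det_{\mathbb M}(T^*T)>0$, and the citation of the Moore--Barnard rank theory for the quaternionic rank bookkeeping --- are exactly the details the paper leaves implicit.
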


The following Theorem follows by Proposition $\ref{pro4}$.
\begin{theorem}\quad\label{thm:G}
Let $\underline{G}=(g_{ij})$ be a Hermitian quaternionic $m\times m$-matrix, $m>2$, such that $g_{ii}=0$, $g_{1j}=1$ for $j=2,\ldots,m$, and $g_{ij}\neq0$ for $i\neq j$. Let $G^{\star}$ be the associated matrix to $\underline{G}$. Then $\underline{G}$ is a special-Gram quaternionic matrix associated to some ordered $m$-tuple $\mathfrak{p}=(p_1,\ldots,p_m)$ of pairwise distinct points in $\partial\mathbf{H}_{\mathbb{H}}^{n}$ if and only if $\mathrm{rank}(G^{\star})\leq n-1$ and $G^{\star}$ is positive semi-definite.
\end{theorem}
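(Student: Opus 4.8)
The plan is to reduce the statement entirely to Proposition \ref{pro4} by passing to the block form of Proposition \ref{Proposition3} and bookkeeping how the rank and signature of $\underline{G}$ are related to those of $G^{\star}$. First I would observe that the hypotheses imposed on $\underline{G}$ (Hermitian, $g_{ii}=0$, $g_{1j}=1$, $g_{ij}\neq 0$ for $i\neq j$, and $m>2$) are precisely the hypotheses of Proposition \ref{Proposition3}, so the associated matrix $G^{\star}$ and the invertible $T\in GL_m(\mathbb{H})$ are available, with
$$
T^*\underline{G}T=\left(\begin{array}{cc|c} 0 & 1 & \mathbf{0} \\ 1 & 0 & \mathbf{0} \\ \hline \mathbf{0} & \mathbf{0} & G^{\star} \end{array}\right).
$$

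Next I would track the signature across this congruence. Since $T$ is invertible, congruence by $T$ preserves the rank and, by Sylvester's principle of inertia (Theorem \ref{theorem1}), the full signature $(n_+,n_-,n_0)$ of $\underline{G}$, the nullity $n_0=m-n_+-n_-$ being determined once the inertia is fixed. Diagonalizing each of the two diagonal blocks on the right separately and invoking Theorem \ref{theorem1} again shows that the signature of $T^*\underline{G}T$ is the sum of the signatures of its blocks. The antidiagonal $2\times 2$ block has eigenvalues $\pm 1$, hence signature $(1,1,0)$, so writing the signature of $G^{\star}$ as $(n_+^{\star},n_-^{\star},n_0^{\star})$ I obtain
$$
\big(n_+,n_-,n_0\big)=\big(n_+^{\star}+1,\;n_-^{\star}+1,\;n_0^{\star}\big).
$$

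Then I would translate the criterion of Proposition \ref{pro4}, which states that $\underline{G}$ is the special-Gram matrix of an $m$-tuple of pairwise distinct isotropic points in $\partial\mathbf{H}_{\mathbb{H}}^{n}$ if and only if $\mathrm{rank}(\underline{G})\leq n+1$, $n_-=1$, and $1\leq n_+\leq n$ (the relation $1+n_++n_0=m$ being automatic once $n_-=1$). The condition $n_-=1$ becomes $n_-^{\star}=0$, i.e.\ $G^{\star}$ has no negative eigenvalues, which by Sylvester's inertia is exactly positive semi-definiteness of $G^{\star}$ (cf.\ Theorem \ref{Thorem new2}). By the corollary preceding this theorem, $\mathrm{rank}(\underline{G})=\mathrm{rank}(G^{\star})+2$, so $\mathrm{rank}(\underline{G})\leq n+1$ is equivalent to $\mathrm{rank}(G^{\star})\leq n-1$. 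It remains to check that the inertia bounds on $n_+$ add nothing further: the lower bound $n_+=n_+^{\star}+1\geq 1$ holds for free, and once $G^{\star}$ is positive semi-definite one has $n_+^{\star}=\mathrm{rank}(G^{\star})$, so the upper bound $n_+=\mathrm{rank}(G^{\star})+1\leq n$ coincides with the rank condition already extracted. Hence the three conditions of Proposition \ref{pro4} collapse precisely to ``$\mathrm{rank}(G^{\star})\leq n-1$ and $G^{\star}$ positive semi-definite'', giving both directions of the asserted equivalence.

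The step needing the most care will be the signature bookkeeping through the congruence: one must make sure that the quaternionic Sylvester inertia (Theorem \ref{theorem1}) genuinely tracks all three of $n_+,n_-,n_0$, and that additivity of the signature over the block decomposition remains valid when a block is only semi-definite (i.e.\ possibly degenerate), which is why I diagonalize each block separately rather than argue through the determinant. Once this is secured, the rest is a direct substitution using the rank identity of the preceding corollary, and no new estimate is required.
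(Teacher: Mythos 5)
Your proof is correct and follows exactly the route the paper intends: the paper's entire argument for Theorem \ref{thm:G} is the one-line remark that it ``follows by Proposition \ref{pro4}'', with Proposition \ref{Proposition3} and its corollary implicitly supplying the block decomposition and the rank identity $\mathrm{rank}(\underline{G})=\mathrm{rank}(G^{\star})+2$. Your explicit signature bookkeeping $(n_+,n_-,n_0)=(n_+^{\star}+1,\,n_-^{\star}+1,\,n_0^{\star})$, together with the observation that positive semi-definiteness of $G^{\star}$ is exactly $n_-^{\star}=0$ and that the bound $n_+\leq n$ collapses into the rank condition, simply fills in the details the paper leaves unstated.
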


Using Theorem $\ref{Thorem new2}$, we may express Theorem \ref{thm:G} equivalently as:
\begin{theorem}\quad\label{Theorem 6}
Let $\underline{G}=(g_{ij})$ be a Hermitian quaternionic $m\times m$-matrix, $m>2$, such that $g_{ii}=0$, $g_{1j}=1$ for $j=2,\ldots,m$, and $g_{ij}\neq0$ for $i\neq j$. Let $G^{\star}$ be the associated matrix to $G$. Then $\underline{G}$ is a special-Gram quaternionic matrix associated to some ordered m-tuple $\mathfrak{p}=(p_1,\ldots,p_m)$ of pairwise distinct  points in $\partial\mathbf{H}_{\mathbb{H}}^{n}$ if and only if $\mathrm{rank}(G^{\star})\leq n-1$ and all principal minors of $G^{\star}$ are non-negative.
\end{theorem}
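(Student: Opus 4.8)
The plan is to obtain Theorem~\ref{Theorem 6} as an immediate reformulation of Theorem~\ref{thm:G}, since the two statements share identical hypotheses on $\underline{G}$ and on the associated matrix $G^\star$, as well as the identical rank condition $\mathrm{rank}(G^\star)\le n-1$; the only point of difference is the condition imposed on $G^\star$ itself. Consequently the entire task reduces to verifying that, for the Hermitian quaternionic matrix $G^\star$, being positive semi-definite is equivalent to having all of its principal minors non-negative. Once this equivalence is established, the two theorems say literally the same thing.

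First I would invoke Theorem~\ref{thm:G}, which already characterizes $\underline{G}$ as a special-Gram quaternionic matrix associated to an ordered $m$-tuple of pairwise distinct points in $\partial\mathbf{H}_{\mathbb{H}}^{n}$ precisely when $\mathrm{rank}(G^\star)\le n-1$ and $G^\star$ is positive semi-definite. Next I would apply Theorem~\ref{Thorem new2} to the matrix $G^\star$, which supplies exactly the equivalence needed: a Hermitian quaternionic matrix is positive semi-definite if and only if all of its principal minors are non-negative. Substituting this equivalent form of the positive semi-definiteness condition into the statement of Theorem~\ref{thm:G} yields Theorem~\ref{Theorem 6} verbatim, with no change to the rank hypothesis.

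There is no genuine computational obstacle here, as the argument is a direct substitution of one characterization for another. The only point deserving attention is conceptual rather than technical: for positive semi-definiteness one must test \emph{all} principal minors, and not merely the leading ones that would suffice for positive definiteness under Sylvester's criterion (Theorem~\ref{Theorem 3}). This distinction is precisely what Theorem~\ref{Thorem new2} accounts for, so once that theorem is in hand the proof is complete without any further calculation.
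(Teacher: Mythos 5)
Your proposal is correct and follows exactly the paper's own route: the paper obtains Theorem~\ref{Theorem 6} simply by substituting the equivalence of Theorem~\ref{Thorem new2} (positive semi-definiteness $\Leftrightarrow$ non-negativity of principal minors) into the characterization of Theorem~\ref{thm:G}, which is precisely your argument. Your closing remark is also well taken: one must indeed test \emph{all} principal minors rather than only the leading ones, and this is the correct reading of Theorem~\ref{Thorem new2} (whose statement in the paper is loosely worded with ``leading,'' though its proof in fact establishes the version for all principal minors that both you and Theorem~\ref{Theorem 6} use).
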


We call the conditions in Theorem \ref{Theorem 6} the \textit{Moore's determinant conditions}.
\section{The Moduli space}
\subsection{Invariants}
We first  recall Cartan's angular invariant and quaternionic cross-ratio. One may find more information in \cite{Apanasov2007,Cao,Platis2014}.

Let $\mathfrak{p}=(p_1,p_2,p_3)$ be an ordered triple of pairwise distinct  points in $\partial\mathbf{H}_{\mathbb{H}}^n$. The quaternionic Cartan's angular $\mathbb{A}_{\mathbb{H}}(\mathfrak{p})$ of $\mathfrak{p}$ is defined by
$$
\mathbb{A}_{\mathbb{H}}(\mathfrak{p})=
\arccos\frac{\Re(-\langle\mathbf{p}_1,\mathbf{p}_2,\mathbf{p}_3\rangle_1)}
{|\langle\mathbf{p}_1,\mathbf{p}_2,\mathbf{p}_3\rangle_1|},
$$
where $\mathbf{p}_i\in \mathbb{H}^{n,1}$ are the lifts of $p_i$, $\langle\mathbf{p}_1,\mathbf{p}_2,\mathbf{p}_3\rangle_1=\langle\mathbf{p}_1,\mathbf{p}_2\rangle_1
\langle\mathbf{p}_2,\mathbf{p}_3\rangle_1\langle\mathbf{p}_3,\mathbf{p}_1\rangle_1$ and $\Re(-\langle\mathbf{p}_1,\mathbf{p}_2,\mathbf{p}_3\rangle_1)>0$.

Rewrite $-\langle\mathbf{p}_1,\mathbf{p}_2,\mathbf{p}_3\rangle_1$ as $|\langle\mathbf{p}_1,\mathbf{p}_2,\mathbf{p}_3\rangle_1|e^{\mathfrak{u}\theta}$, where $\mathfrak{u}\in\mathfrak{sp}(1)$. Then $\theta=\mathbb{A}_{\mathbb{H}}(\mathfrak{p})$. Therefore, the quaternionic Cartan's angular  can also be given by:
$$\mathbb{A}_{\mathbb{H}}(\mathfrak{p})=\arg(-\langle\mathbf{p}_1,\mathbf{p}_2,\mathbf{p}_3\rangle_1).$$

One can prove that $0\leq\mathbb{A}_{\mathbb{H}}(\mathfrak{p})\leq\pi/2$
and $\mathbb{A}_{\mathbb{H}}(\mathfrak{p})$ independent of the chosen lifts and the order of three points.

We give the definition of quaternionic cross-ratio, following \cite{Cao}. See \cite{Platis2014} for more cross-ratio.

Let $\mathfrak{p}=(p_1,p_2,p_3,p_4)$ be a quadruple of pairwise distinct  points in $\partial\mathbf{H}_{\mathbb{H}}^n$. Their cross-ratio is defined by $$\mathbb{X}(p_1,p_2,p_3,p_4)=\langle \mathbf{p}_3,\mathbf{p}_1\rangle_1\langle\mathbf{p}_3,\mathbf{p}_2\rangle_1^{-1}\langle\mathbf{p}_4,\mathbf{p}_2
\rangle_1\langle\mathbf{p}_4,\mathbf{p}_1\rangle_1^{-1},$$
where $\mathbf{p}_i$ are the lifts of $p_i$.
Observe that $$\mathbb{X}(p_1\lambda_1,p_2\lambda_2,p_3\lambda_3,p_4\lambda_4)=\overline{\lambda}_1\langle \mathbf{p}_3,\mathbf{p}_1\rangle_1\langle\mathbf{p}_3,\mathbf{p}_2\rangle_1^{-1}\langle\mathbf{p}_4,\mathbf{p}_2
\rangle_1\langle\mathbf{p}_4,\mathbf{p}_1\rangle_1^{-1}{\overline{\lambda}_1}^{-1}.$$

Let now $\mathfrak{p}=(p_1,\ldots,p_m)$ be an $m$-tuple of pairwise distinct points in $\partial\mathbf{H}_{\mathbb{H}}^n$.
For short, let $$\mathbb{X}_{2j}=\mathbb{X}(p_1,p_2,p_3,p_j),~\mathbb{X}_{3j}=\mathbb{X}(p_1,p_3,p_2,p_j),~
\mathbb{X}_{kj}=\mathbb{X}(p_1,p_k,p_3,p_j),$$
where $m\geq4$, $4\leq k \leq m-1$, $k<j$. It is clear that the number of the above cross-ratios is equal to $m(m-3)/2$.

By direct computations, we have the following proposition:
\begin{proposition}\quad\label{Proposition4}
Let $\mathfrak{p}=(p_1,\ldots,p_m)$ be an $m$-tuple of pairwise distinct points in $\partial\mathbf{H}_{\mathbb{H}}^n$ and $\underline{G}(\mathfrak{p})=(g_{ij})$ be the normalized special-Gram quaternionic matrix of $\mathfrak{p}$. Then the following relations hold:
\begin{equation*}
\begin{aligned}
\mathbb{A}_{\mathbb{H}}&=\mathbb{A}_{\mathbb{H}}(p_1,p_2,p_3)=\mathrm{arg}(-\overline{g}_{23}),\\
\mathbb{X}_{2j}&=\mathbb{X}(p_1,p_2,p_3,p_j)=g_{23}^{~-1}g_{2j},\\
\mathbb{X}_{3j}&=\mathbb{X}(p_1,p_3,p_2,p_j)={\overline{g}_{23}^{~-1}}g_{3j},\\
\mathbb{X}_{kj}&=\mathbb{X}(p_1,p_k,p_2,p_j)=\overline{g}_{2k}^{~-1}g_{kj},\\
\end{aligned}
\quad\text{and}\quad
\begin{aligned}
g_{23}&=-re^{-\mathfrak{u}\mathbb{A}_{\mathbb{H}}},\\
g_{2j}&=
-re^{-\mathfrak{u}\mathbb{A}_{\mathbb{H}}}\mathbb{X}_{2j},\\
g_{3j}&=
-re^{\mathfrak{u}\mathbb{A}_{\mathbb{H}}}\mathbb{X}_{3j},\\
g_{kj}&=-r\overline{\mathbb{X}}_{2k}e^{\mathfrak{u}\mathbb{A}_{\mathbb{H}}}\mathbb{X}_{kj},
\end{aligned}
\end{equation*}
where $r=|g_{23}|$  and $\mathfrak{u}\in\mathfrak{sp}(1) $ and all indices are in accordance with the indices of the above-defined  cross-ratios.
\end{proposition}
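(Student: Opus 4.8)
The plan is to prove every identity by direct substitution into the definitions of the cross-ratio and the Cartan angle, exploiting the two defining features of $\underline{G}(\mathfrak{p})$: the normalization $g_{1j}=1$ for $j\ge 2$ together with the Hermitian symmetry $g_{ji}=\overline{g}_{ij}$. The latter in particular forces $\langle\mathbf{p}_1,\mathbf{p}_j\rangle_1=g_{j1}=\overline{g}_{1j}=1$ as well. I would open by recording once and for all the normalized values $\langle\mathbf{p}_j,\mathbf{p}_1\rangle_1=g_{1j}=1$ and $\langle\mathbf{p}_1,\mathbf{p}_j\rangle_1=g_{j1}=1$, so that in each subsequent expansion the two ``$p_1$-factors'' collapse to $1$ and only the genuine off-diagonal entries survive.

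First I would handle the three cross-ratio identities. Writing out $\mathbb{X}(p_1,p_2,p_3,p_j)=\langle\mathbf{p}_3,\mathbf{p}_1\rangle_1\langle\mathbf{p}_3,\mathbf{p}_2\rangle_1^{-1}\langle\mathbf{p}_j,\mathbf{p}_2\rangle_1\langle\mathbf{p}_j,\mathbf{p}_1\rangle_1^{-1}$ and substituting $\langle\mathbf{p}_3,\mathbf{p}_1\rangle_1=g_{13}=1$, $\langle\mathbf{p}_3,\mathbf{p}_2\rangle_1=g_{23}$, $\langle\mathbf{p}_j,\mathbf{p}_2\rangle_1=g_{2j}$, and $\langle\mathbf{p}_j,\mathbf{p}_1\rangle_1=1$ immediately gives $\mathbb{X}_{2j}=g_{23}^{-1}g_{2j}$. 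The same pattern, now using Hermitian symmetry to rewrite $\langle\mathbf{p}_2,\mathbf{p}_3\rangle_1=g_{32}=\overline{g}_{23}$ and $\langle\mathbf{p}_2,\mathbf{p}_k\rangle_1=g_{k2}=\overline{g}_{2k}$, produces $\mathbb{X}_{3j}=\overline{g}_{23}^{-1}g_{3j}$ and $\mathbb{X}_{kj}=\overline{g}_{2k}^{-1}g_{kj}$. For the angular invariant I would expand $\langle\mathbf{p}_1,\mathbf{p}_2,\mathbf{p}_3\rangle_1=\langle\mathbf{p}_1,\mathbf{p}_2\rangle_1\langle\mathbf{p}_2,\mathbf{p}_3\rangle_1\langle\mathbf{p}_3,\mathbf{p}_1\rangle_1$ and note that the outer two factors equal $1$, so the triple product reduces to $\overline{g}_{23}$ and hence $\mathbb{A}_{\mathbb{H}}=\arg(-\overline{g}_{23})$.

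Second I would invert these relations using the polar form from the preliminary section. Since $r=|g_{23}|=|\overline{g}_{23}|$ and $\mathbb{A}_{\mathbb{H}}=\arg(-\overline{g}_{23})$, writing $-\overline{g}_{23}$ in polar form gives $-\overline{g}_{23}=r\,e^{\mathfrak{u}\mathbb{A}_{\mathbb{H}}}$, so that $\overline{g}_{23}=-r\,e^{\mathfrak{u}\mathbb{A}_{\mathbb{H}}}$; conjugating and using $\overline{\mathfrak{u}}=-\mathfrak{u}$, equivalently $\overline{e^{\mathfrak{u}\theta}}=e^{-\mathfrak{u}\theta}$, yields $g_{23}=-r\,e^{-\mathfrak{u}\mathbb{A}_{\mathbb{H}}}$. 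Multiplying the three cross-ratio identities on the left by $g_{23}$, $\overline{g}_{23}$, and $\overline{g}_{2k}$ respectively then recovers $g_{2j}$, $g_{3j}$, and $g_{kj}$. For the last one I would first compute $\overline{g}_{2k}=\overline{-r\,e^{-\mathfrak{u}\mathbb{A}_{\mathbb{H}}}\mathbb{X}_{2k}}=-r\,\overline{\mathbb{X}}_{2k}\,e^{\mathfrak{u}\mathbb{A}_{\mathbb{H}}}$, reversing the order of the factors under conjugation, and then substitute to obtain $g_{kj}=-r\,\overline{\mathbb{X}}_{2k}\,e^{\mathfrak{u}\mathbb{A}_{\mathbb{H}}}\mathbb{X}_{kj}$.

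The computation is elementary; the only genuine care required is bookkeeping the non-commutativity of $\mathbb{H}$. In particular I must keep every inner-product factor on its correct side throughout, never commute a quaternion past $e^{\pm\mathfrak{u}\mathbb{A}_{\mathbb{H}}}$ or past a cross-ratio, and apply $\overline{ab}=\overline{b}\,\overline{a}$ with its order reversal whenever conjugating a product. This is precisely where the derivation of $g_{kj}$ departs from the complex case and is the step most prone to error, so I would present that conjugation explicitly rather than leave it to the reader.
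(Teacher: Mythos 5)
Your proposal is correct and matches the paper's approach exactly: the paper gives no written proof beyond the phrase ``by direct computations,'' and your substitution of the normalized entries $g_{1j}=g_{j1}=1$ into the definitions of $\mathbb{X}$ and $\mathbb{A}_{\mathbb{H}}$, followed by the polar-form inversion $-\overline{g}_{23}=re^{\mathfrak{u}\mathbb{A}_{\mathbb{H}}}$ with order-preserving conjugation of products, is precisely that computation. Note only that you (rightly) proved the proposition as stated, with $p_2$ in the third slot of $\mathbb{X}_{kj}$, which is consistent with the formula $\overline{g}_{2k}^{\,-1}g_{kj}$, whereas the paper's earlier definition in \S 4.1 writes $p_3$ there --- an inconsistency internal to the paper, not to your argument.
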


According to Corollary \ref{Corollary1}, we have the following result which is useful for us to study the Moduli space:
\begin{theorem}\quad\label{Theorem 5}
The congruence class of $\mathfrak{p}$ in ${\rm PSp}(n,1)$ is uniquely determined by a positive number $r$ and a unit pure quaternion $\mathfrak{u}\in \mathfrak{sp}(1)$ and the invariants given the above $\mathbb{X}_{2j},\mathbb{X}_{3j},\mathbb{X}_{kj},\mathbb{A_H}$.
\end{theorem}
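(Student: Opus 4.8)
The plan is to reduce the statement entirely to the dictionary already assembled in the preceding results: Corollary \ref{Corollary1} identifies the congruence class $[\mathfrak{p}]$ in ${\rm PSp}(n,1)$ with the single normalized special-Gram matrix $\underline{G}(\mathfrak{p})$, while Proposition \ref{Proposition4} expresses the entries of $\underline{G}(\mathfrak{p})$ and the invariants in terms of one another. Concretely, I would show that the assignment
$$(r,\mathfrak{u},\mathbb{A}_{\mathbb{H}},\{\mathbb{X}_{2j}\},\{\mathbb{X}_{3j}\},\{\mathbb{X}_{kj}\})\longmapsto\underline{G}(\mathfrak{p})$$
is a bijection onto the set of normalized special-Gram matrices, after which the theorem is immediate from Corollary \ref{Corollary1}.

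First I would note that $\underline{G}(\mathfrak{p})=(g_{ij})$ is completely encoded by its entries $g_{ij}$ with $2\le i<j\le m$, since its diagonal vanishes, its first row and column are identically $1$, and it is Hermitian. Next I would use the right-hand column of Proposition \ref{Proposition4} to produce every such entry from the invariant data: $g_{23}=-re^{-\mathfrak{u}\mathbb{A}_{\mathbb{H}}}$, then $g_{2j}$ from $\mathbb{X}_{2j}$, $g_{3j}$ from $\mathbb{X}_{3j}$, and $g_{kj}$ for $4\le k<j$ from $\overline{\mathbb{X}}_{2k}$ together with $\mathbb{X}_{kj}$. The one bookkeeping point to verify here is that these four families of indices partition the set of pairs $(i,j)$ with $2\le i<j\le m$ exactly once, so that no entry is left undetermined and none is assigned two conflicting values; granting this, the listed data determines $\underline{G}(\mathfrak{p})$ in full.

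For the reverse direction, which yields the genuine uniqueness (distinct classes carry distinct invariants), I would read the invariants off the matrix via the left-hand column of Proposition \ref{Proposition4}: the scalar data come from $r=|g_{23}|$ and $\mathbb{A}_{\mathbb{H}}=\arg(-\overline{g}_{23})$, the axis $\mathfrak{u}$ is the unit pure quaternion appearing in the factorization $-g_{23}=re^{-\mathfrak{u}\mathbb{A}_{\mathbb{H}}}$, and the cross-ratios are $\mathbb{X}_{2j}=g_{23}^{-1}g_{2j}$, $\mathbb{X}_{3j}=\overline{g}_{23}^{-1}g_{3j}$, $\mathbb{X}_{kj}=\overline{g}_{2k}^{-1}g_{kj}$. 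Because these quantities were already shown to be independent of the chosen lifts and invariant under the action of ${\rm Sp}(n,1)$, they are legitimate functions of the congruence class, and Corollary \ref{Corollary1} converts the mutual determination of $\underline{G}(\mathfrak{p})$ and the invariant data into the asserted statement.

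The only delicate point I anticipate is the degenerate value $\mathbb{A}_{\mathbb{H}}=0$, where $g_{23}=-r$ is real and the factorization $-g_{23}=re^{-\mathfrak{u}\mathbb{A}_{\mathbb{H}}}$ no longer pins down $\mathfrak{u}$. There $\mathfrak{u}$ is simply redundant, since the matrix is already determined by $r$, $\mathbb{A}_{\mathbb{H}}$ and the cross-ratios, so no information is lost and the argument goes through unchanged. Apart from this edge case and the index-bookkeeping of the second paragraph, the proof is a direct translation through Proposition \ref{Proposition4} and Corollary \ref{Corollary1}.
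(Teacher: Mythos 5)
Your proposal is correct and follows exactly the route the paper takes: the paper offers no detailed argument, merely stating the theorem ``according to Corollary~\ref{Corollary1}'' with Proposition~\ref{Proposition4} supplying the dictionary between the entries of $\underline{G}(\mathfrak{p})$ and the invariants, which is precisely your reduction. Your additional checks (that the index families $g_{23}$, $g_{2j}$, $g_{3j}$, $g_{kj}$ partition the pairs $2\le i<j\le m$, and that the redundancy of $\mathfrak{u}$ when $\mathbb{A}_{\mathbb{H}}=0$ is harmless since the data still determine the matrix) are details the paper leaves implicit, and they are handled correctly.
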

\subsection{Moduli space and proof of the theorem \ref{theorem new1}}
Let $\mathrm{I}^{s}_{n}=\{(i_1,\ldots,i_s)\;|\;1\leq i_1< i_2\ldots<i_s\leq n\}$ and let $A_{\mathrm{I^{s}_{n}}}$ denote the sub-matrix of an $n\times n$-matrix $A$, formed by choosing the elements of the original matrix from the rows whose indices are in $i_1,\ldots,i_s$ and columns whose indexes are in $i_1,\ldots,i_s$.

Let $\mathfrak{p}=(p_1,\ldots,p_m)$ be an $m$-tuple of pairwise distinct points in $\partial\mathbf{H}_{\mathbb{H}}^n$ and $\underline{G}(\mathfrak{p})=(g_{ij})$ be the normalized special-Gram quaternionic matrix of $\mathfrak{p}$. Let $G^\star$ be the associated $(m-2)\times (m-2)$-matrix to $\underline{G}(\mathfrak{p})$. The principal minors of $G^\star$ are ${\det_{\mathbb M}}(G^\star_{\mathrm{I}^{s}_{m-2}}).$

By Proposition $\ref{Proposition4}$, we treat ${\det_{\mathbb M}}(G^\star_{\mathrm{I}^{s}_{m-2}})$ as functions of $$(\mathbb{X}_{2j},\mathbb{X}_{3j},\ldots,\mathbb{X}_{kj},\mathfrak{u},\mathbb{A_H},r).$$
Identify $w=(q_1,\ldots,q_d,\mathfrak{u},t_1,t_2)$ with $(\mathbb{X}_{2j},\mathbb{X}_{3j},\ldots,\mathbb{X}_{kj},\mathfrak{u},\mathbb{A_H},r)$, where $ q_i\in\mathbb{H}$ is nonzero, $i=1,\ldots,d$ and $d=m(m-3)/2$.

We define the map $$D^\star_{\mathrm{I}^{s}_{m-2}}:\mathbb{H}^{m(m-3)/2}\times \mathfrak{sp}(1)\times\mathbb{R}^2\rightarrow \mathbb{R},$$
given by
$$w\mapsto D^\star_{\mathrm{I}^{s}_{m-2}}(w)={\det\nolimits_{\mathbb M}}(G^\star_{\mathrm{I}^{s}_{m-2}}).$$

Let $\mathcal{F}_1(n,m)$ be the quotient configuration space of ordered $m$-tuples of pairwise distinct points in $\partial\mathbf{H}_{\mathbb{H}}^n$, that is, the space of $m$-tuples cut by the action of $PSp(n,1)$. Let $[\mathfrak{p}]\in\mathcal{F}_1(n,m)$. Then, according to Theorem $\ref{Theorem 6}$ and Proposition $\ref{Proposition4}$, we define the map
$$\tau_1:\mathcal{F}_1(n,m)\to\mathbb{H}^{m(m-3)/2}\times \mathfrak{sp}(1)\times\mathbb{R}^2,$$
given by
$$[\mathfrak{p}]\mapsto w.$$
\begin{remark}\quad
The lower index 1 means that we use the Hermitian form $H_1$ for the entries of the special-Gram matrix. Likewise, the index 2, 3, 4.
\end{remark}
Let $$
\P_1(n,m)=\big\{w\in\mathbb{H}^d\times \mathfrak{sp}(1)\times\mathbb{R}^2:D^\star_{\mathrm{I}^{s}_{m-2}}(w)\geq0,\;s\leq n-1;\;
D^\star_{\mathrm{I}^{s}_{m-2}}(w)=0,\;s>n-1\big\},$$
where $w=(q_1,\ldots,q_d,\mathfrak{u},t_1,t_2)$, $0\neq q_i\in\mathbb{H}$, $\mathfrak{u}\in \mathfrak{sp}(1)$, $t_1\in[0,\pi/2]$, $t_2>0$ for all $i=1,\ldots,d=m(m-3)/2$.

Now let's prove our main result:

\begin{proof1}~

Our purpose is to find a homeomorphic map $\mathcal{F}_1(n,m)\to \P_1(n,m)$. It will be the map $\tau_1:\mathcal{F}_1(n,m)\to \P_1(n,m)$ as above.
Let $\P_1(n,m)$ be equipped with the topology inherited from $\mathbb{H}^{m(m-3)/2}\times \mathfrak{sp}(1)\times\mathbb{R}^2$. Hence we only need to prove that  $\tau_1$ is bijective.

Injectivity follows straightforwardly by Theorem $\ref{Theorem 5}$.
It is only necessary to show that the map $\tau_1$ is surjective.

If $w\in \P_1(n,m)$, we can construct a Hermitian quaternionic $m\times m$-matrix $\underline{G}=(g_{ij})$ with $g_{ii}=0$ and $g_{1j}=1$ for $j=2,\ldots,m$.
We see that  $w=(q_1,\ldots,q_d,\mathfrak{u},t_1,t_2)$ identify with $(\mathbb{X}_{2j},\mathbb{X}_{3j},\ldots,\mathbb{X}_{kj},\mathfrak{u},\mathbb{A_H},r)$. Then we can fix the other entries of $\underline{G}$ using Proposition $\ref{Proposition4}$.
If this $\underline{G}$ satisfies Moore's conditions, see Theorem $\ref{Theorem 6}$, then $\underline{G}$ is the normalized special-Gram quaternionic matrix for some ordered $m$-tuple of pairwise distinct  points $\mathfrak{p}$ in $\partial\mathbf{H}_{\mathbb{H}}^n$. In other words, $w$ uniquely corresponds to a point $[\mathfrak{p}]\in \mathcal{F}_1(n,m)$. This proves that $\tau_1$ is surjective.
\qed
\end{proof1}

We call $\P_1(n,m)$ the $moduli$ $space$ for $\mathcal{F}_1(n,m)$.  The following corollary is clear.
\begin{corollary}
$\mathcal{F}_1(n,m)$ is a real $2m^2-6m+5-\sum^{m-n-1}_{i=1}{m-2 \choose n-1+i}$ dimensional algebraic variety when $m>n+1$.
\end{corollary}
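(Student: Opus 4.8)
The plan is to read everything off the homeomorphism $\mathcal{F}_1(n,m)\cong\P_1(n,m)$ furnished by Theorem~\ref{theorem new1}. Since a homeomorphism preserves real (semialgebraic) dimension, it suffices to compute $\dim_{\mathbb R}\P_1(n,m)$ as a subset of the ambient space $\mathbb{H}^{d}\times\mathfrak{sp}(1)\times\mathbb{R}^2$, $d=m(m-3)/2$. First I would record the ambient dimension: the factor $\mathbb{H}^{d}$ contributes the $d$ cross‑ratio quaternions $q_1,\dots,q_d$, hence $4d=2m(m-3)=2m^2-6m$ real coordinates; the factor $\mathfrak{sp}(1)=\mathrm{Pu}(\mathbb{H})$ is the three–dimensional Lie algebra of $Sp(1)$, carrying the parameter $\mathfrak{u}$; and the factor $\mathbb{R}^2$ carries $t_1=\mathbb{A}_{\mathbb H}$ and $t_2=r$. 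Thus the ambient real dimension is $4d+3+2=2m^2-6m+5$.

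Next I would analyse the conditions defining $\P_1(n,m)$. By construction these are the Moore's determinant conditions of Theorem~\ref{Theorem 6}, transported via Proposition~\ref{Proposition4} to the functions $D^\star_{\mathrm{I}^{s}_{m-2}}$ of $w$; by Proposition~\ref{Proposition3} the entries of the associated matrix $G^\star$ are affine–linear in the $g_{ij}$, and Moore's determinant is polynomial in those entries, so $\P_1(n,m)$ is a genuine real algebraic variety. The conditions split into the inequalities $D^\star_{\mathrm{I}^{s}_{m-2}}(w)\ge0$ for $s\le n-1$ and the equalities $D^\star_{\mathrm{I}^{s}_{m-2}}(w)=0$ for $s>n-1$. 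On the top stratum, where $G^\star$ has rank exactly $n-1$ and all its $(n-1)$‑st minors are strictly positive, the inequalities hold strictly; they therefore describe a relatively open, full–dimensional piece and do not lower the dimension. Only the equalities cut the dimension. For each order $s$ with $n\le s\le m-2$ there are $\binom{m-2}{s}$ principal minors, so, after the substitution $s=n-1+i$, the number of scalar equations is
\[
\sum_{s=n}^{m-2}\binom{m-2}{s}=\sum_{i=1}^{m-n-1}\binom{m-2}{n-1+i}.
\]
The hypothesis $m>n+1$ is equivalent to $m-2\ge n$, so this family is nonempty and $\P_1(n,m)$ is a proper subvariety.

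Granting that each of these equations drops the dimension by exactly one, the count yields
\[
\dim_{\mathbb R}\mathcal{F}_1(n,m)=\dim_{\mathbb R}\P_1(n,m)=2m^2-6m+5-\sum_{i=1}^{m-n-1}\binom{m-2}{n-1+i},
\]
which is the asserted value.

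The main obstacle is precisely the step just granted: establishing that the equality constraints impose codimension equal to their number, i.e. that they are generically independent. This is delicate because on $\P_1(n,m)$ the matrix $G^\star$ is positive semi-definite, and for positive semi-definite matrices the vanishing of the higher–order principal minors ($s>n$) is a formal consequence of the vanishing of the order-$n$ minors. One must therefore verify that, along the top stratum where $\operatorname{rank}G^\star=n-1$, the differentials $dD^\star_{\mathrm{I}^{s}_{m-2}}$ are linearly independent at a generic point — for instance by exhibiting one explicit $w$ at which the Jacobian of the full system attains the rank $\sum_{i=1}^{m-n-1}\binom{m-2}{n-1+i}$. Proving this genericity statement, rather than the bookkeeping of the ambient dimension, is where the real work lies.
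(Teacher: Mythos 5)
The paper offers no argument for this corollary at all --- after naming $\P_1(n,m)$ the moduli space it simply declares the statement ``clear'' --- so your write-up is a faithful unpacking of the intended reasoning: ambient dimension of $\P_1(n,m)$ minus the number of equality constraints. Unpacking it honestly is exactly what exposes the problem, and the step you ``grant'' is not merely unproven; it is false, for the reason your own remark about principal minors already suggests. On $\P_1(n,m)$ all principal minors of $G^\star$ are nonnegative (those of order $\le n-1$ by the inequalities, those of order $\ge n$ because they vanish), so by Theorem \ref{Thorem new2} the matrix $G^\star$ is positive semi-definite, and since for a PSD matrix the rank is the largest order of a nonvanishing principal minor, $\P_1(n,m)$ is exactly the locus $\{G^\star\ \mathrm{PSD},\ \mathrm{rank}\,G^\star\le n-1\}$ (crossed with the $3(m-2)$ coordinates $\mathrm{Pu}(g_{2j})$ that do not enter $G^\star$). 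The dimension of that determinantal locus can be computed outright: writing $G^\star=B^*B$ with $B\in\mathbb{H}^{(n-1)\times(m-2)}$ of full rank, the fibers of $B\mapsto B^*B$ are orbits of $\mathrm{Sp}(n-1)$, so the locus has dimension $4(n-1)(m-2)-(n-1)(2n-1)$ inside the $\bigl(2(m-2)^2-(m-2)\bigr)$-dimensional space of Hermitian quaternionic matrices, i.e.\ its codimension is
\[
\bigl[2(m-2)^2-(m-2)\bigr]-\bigl[4(n-1)(m-2)-(n-1)(2n-1)\bigr]
=(m-n-1)\bigl(2(m-n-1)-1\bigr),
\]
a polynomial in $m$, whereas the number of equality constraints $\sum_{i=1}^{m-n-1}\binom{m-2}{n-1+i}$ grows exponentially. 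The two counts coincide when $m=n+2$ (both equal $1$), which makes the smallest cases misleading, but they diverge in general: for $(n,m)=(2,5)$ the codimension is $6$ against $4$ equations, and for $(n,m)=(2,8)$ the inequality reverses ($57$ equations, codimension $45$). This is the standard real-algebraic phenomenon (as for $x^2+y^2=0$ in $\mathbb{R}^2$): near a top-stratum point the order-$\le n-1$ minors stay positive, so the zero set of the equalities alone already coincides with the rank-$\le n-1$ PSD locus; consequently the differentials $dD^\star_{\mathrm{I}^s_{m-2}}$ are dependent at \emph{every} such point whenever the codimension differs from the number of equations, and no choice of a generic $w$ can make the Jacobian attain the rank you need. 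The gap you flagged cannot be closed, because the dimension asserted in the corollary is simply not the dimension of $\P_1(n,m)$.

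There is also a smaller slip in your ambient count, inherited from an inconsistency in the paper. You take $\dim\mathfrak{sp}(1)=3$, but the paper defines $\mathfrak{sp}(1)=\mathrm{Pu}(\mathbb{H})\cap S$, the $2$-sphere of \emph{unit} pure quaternions, and Proposition \ref{Proposition4} needs $\mathfrak{u}$ to be unit for $g_{23}=-re^{-\mathfrak{u}\mathbb{A}_{\mathbb{H}}}$ to be a polar decomposition; with a $3$-dimensional $\mathfrak{u}$ the map $(\mathfrak{u},t_1)\mapsto e^{\mathfrak{u}t_1}$ is no longer injective, so $\tau_1$ could not be the bijection claimed in Theorem \ref{theorem new1}. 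With the paper's own definition the ambient dimension is $4d+2+2=2m^2-6m+4$, so even the leading term $2m^2-6m+5$ cannot be recovered. In short: your bookkeeping reproduces the paper's implicit count, and the ``main obstacle'' you isolated is real and fatal --- granting Theorem \ref{theorem new1}, the dimension must be computed as that of a determinantal variety, namely $(4n-1)(m-2)-(n-1)(2n-1)$, and not by ``ambient minus number of equations.''
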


\begin{remark}\quad
$\mathcal{F}_1(n,m)$ is homeomorphic to $\mathcal{F}_i(n,m)$ since we may pass between two different Hermitian forms of the same signature by using Cayley transformations, where $i=2,3,4$. Therefore $\P_i(n,m)$ shares the same topology with $\P_1(n,m)$.
\end{remark}
\subsection{Moduli space for representation family of totally loxodromic groups and proof of the theorem \ref{Corollary new1}}
This section is motivated by the last section of \cite{Cunha5}. However, our results are different, even when we restrict ourselves to the complex hyperbolic space. Our target space is the deformation space of discrete, faithful, totally loxodromic and finitely generated groups in ${\rm PSp}(2,1)$.
\begin{definition}\quad
A subgroup of ${\rm PSp}(n,1)$ is called {\it totally loxodromic} if it comprises only loxodromic elements and the identity.
\end{definition}

The following important proposition comes from \cite[Corollary 4.5.4.]{Chen1974}.
\begin{proposition}\quad\label{Proposition6}
Let $G$ be a totally loxodromic subgroup of ${\rm Sp}(n,1)$ such that the quaternionic dimension $\mathrm{dim_{\mathbb{H}}}(M(G))$ of the smallest $G$-invariant totally geodesic submanifold $M(G)$, is even. Then $G$ is discrete.
\end{proposition}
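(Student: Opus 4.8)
The plan is to argue by contradiction. After passing to the minimal $G$-invariant totally geodesic submanifold I may assume $M(G)=\mathbf{H}_{\mathbb{H}}^k$ with $k=\dim_{\mathbb{H}}M(G)$ even, so $k\ge 2$, and that $G\le\mathrm{Sp}(k,1)$ acts \emph{irreducibly}, meaning it admits no proper invariant totally geodesic submanifold; total loxodromicity is inherited by this restriction, and discreteness of the restriction is equivalent to discreteness of $G$. Suppose $G$ is not discrete. Then its closure $\overline{G}$ in $\mathrm{Sp}(k,1)$ is a Lie subgroup whose identity component $H:=\overline{G}^0$ is nontrivial, connected, and normal in $\overline{G}$; in particular $gHg^{-1}=H$ for every $g\in G$, so $G$ permutes the fixed and limit sets of $H$.

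The central mechanism is that this normality transfers $H$-invariance of such sets to $G$-invariance, after which irreducibility and total loxodromicity are brought to bear. First I would rule out an interior fixed point of $H$: if $H$ fixed a point of $\mathbf{H}_{\mathbb{H}}^k$, then $\mathrm{Fix}(H)\cap\mathbf{H}_{\mathbb{H}}^k$ would be a nonempty totally geodesic $G$-invariant submanifold, forced by irreducibility to be all of $\mathbf{H}_{\mathbb{H}}^k$, whence $H=\{1\}$ --- a contradiction. Thus $H$ has no interior fixed point and I turn to its limit set $\Lambda(H)\subseteq S^{4k-1}=\partial\mathbf{H}_{\mathbb{H}}^k$, which is $G$-invariant. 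If $\Lambda(H)$ is finite it consists of one or two points: in the one-point case $G$ fixes that boundary point, and the $MAN$ structure of the point-stabiliser together with total loxodromicity (which forbids the unipotent factor $N$) forces a nontrivial parabolic into the non-discrete closure, a contradiction; in the two-point case $H$ preserves a geodesic, hence so does $G$, so $\Lambda(G)$ has at most two points and $M(G)$ collapses to that geodesic, contradicting $k\ge 2$. Each elementary configuration is thereby eliminated by a soft invariance argument.

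The genuinely hard case is that $H$ is \emph{non-elementary}: it has no interior fixed point and $\Lambda(H)$, being the boundary sphere of the minimal $H$-invariant totally geodesic subspace, equals all of $S^{4k-1}$ by the same normality-plus-irreducibility reasoning; thus $H$ is a connected non-compact subgroup acting irreducibly on $\mathbf{H}_{\mathbb{H}}^k$. This is exactly where the parity hypothesis must enter. The expectation is that $H$ contains an elliptic one-parameter subgroup arbitrarily close to the identity, located via a fixed-point-index (Lefschetz) computation sensitive to the parity of $k$: when $k$ is even the relevant index does not vanish, producing a genuine fixed point of a nontrivial flow and hence an elliptic element of $H$. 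Because $G\cap H$ is dense in $H$, such elliptic behaviour would yield elements of $G$ fixing an interior point of $\mathbf{H}_{\mathbb{H}}^k$, in flat contradiction with total loxodromicity --- while an interior fixed point of $H$ as a whole was already excluded. This collapses $H$ to $\{1\}$ and proves $G$ discrete.

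I expect this last step to be the main obstacle: the elementary cases fall to invariance and minimality, but excluding a non-elementary connected closure seems to require the even-dimensionality in an essential, topological way. The decisive and delicate point is to identify the precise space and self-map on which the Lefschetz/Euler-characteristic count is performed --- the closed ball, an invariant sphere transverse to a hyperbolic axis, or the fixed locus of the transverse rotational part in $\mathrm{Sp}(k-1)\times\mathrm{Sp}(1)$ --- so that the parity of $k$ forces a nonzero index and hence an elliptic element. This is the juncture at which I would lean on the rank-one structure theory of Chen--Greenberg.
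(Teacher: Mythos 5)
First, a point of comparison: the paper does not prove this proposition at all --- it is imported verbatim from Chen--Greenberg \cite[Corollary 4.5.4]{Chen1974} --- so your argument has to stand on its own, and as it stands it is not a proof. The scaffolding you chose (pass to the identity component $H=\overline{G}^{\,0}$ of the closure, use normality of $H$ in $\overline{G}$ to make its fixed-point data $G$-invariant, then split into elementary and non-elementary cases) is the standard and sensible one, and your interior-fixed-point and two-point cases are fine. But your one-point case already fails as stated: a parabolic element in the \emph{closure} of $G$ is not a contradiction, since a limit of loxodromic elements can perfectly well be parabolic; total loxodromicity constrains $G$, not $\overline{G}$. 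The correct argument there is different: if $G$ fixes a boundary point, then $G$ lies in the stabilizer $MAN$, and the kernel of the projection $G\to A\cong(\mathbb{R},+)$ is $G\cap MN$, which consists of elliptic and parabolic elements of $G$ and hence is trivial; so $G$ embeds in $\mathbb{R}$, is abelian, commuting loxodromic elements share their axis, and $G$ preserves a geodesic, contradicting minimality of $M(G)$.

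The decisive gap is the non-elementary case, which you explicitly leave as an ``expectation'', and the mechanism you propose there is invalid as formulated. Knowing that $H$ contains an elliptic one-parameter subgroup, together with density of $G\cap H$ in $H$, does \emph{not} yield an elliptic element of $G$: the set of elliptic elements is in general not open (a loxodromic screw motion $a_t\varphi$ converges to the elliptic rotation $\varphi$ as $t\to0$), so the elements of $G$ approximating an elliptic element of $H$ may all be loxodromic; indeed one can build non-discrete totally loxodromic groups of screw motions along a fixed geodesic whose closure contains an entire rotation one-parameter group. What arguments of this type actually require is an element of $H$ in the \emph{interior} of the elliptic locus, and nonempty open sets of (regular) elliptic elements exist precisely when the isometry group has a compact Cartan subgroup, i.e.\ when the group and its maximal compact subgroup have equal rank. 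This is where parity genuinely enters, and it exposes the second gap, namely your opening reduction: the minimal $G$-invariant totally geodesic submanifold of $\mathbf{H}_{\mathbb{H}}^{n}$ need not be a quaternionic subspace $\mathbf{H}_{\mathbb{H}}^{k}$ --- totally geodesic submanifolds also include real and complex hyperbolic subspaces --- and the dangerous case is exactly a real hyperbolic subspace of odd dimension, because ${\rm SO}(2m+1,1)$ has no compact Cartan subgroup, while ${\rm SO}(2m,1)$, ${\rm U}(j,1)$ and ${\rm Sp}(j,1)$ (for every $j$) all do. Under your reduction to ${\rm Sp}(k,1)$ the evenness of $k$ would never be needed at all, since ${\rm Sp}(k,1)$ contains open sets of regular elliptic elements for every $k$; the fact that the hypothesis becomes vacuous in your setup is a clear sign that the reduction has discarded precisely the situation the hypothesis exists to exclude, not that a parity-sensitive Lefschetz count on $\mathbf{H}_{\mathbb{H}}^{k}$ is waiting to be found.
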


Let $G_0$ be a totally loxodromic subgroup of ${\rm PSp}(n,1)$ and $\mathrm{dim_{\mathbb{H}}}~M(G_0)$ be even. By the preceding proposition, $G_0$ is discrete. For example, the totally loxodromic subgroups of ${\rm PSp}(2,1)$ that are generated by finite distinct generators without common fixed points. These subgroup are discrete subgroups of ${\rm PSp}(2,1)$.

Let $H=\langle h_1,h_2,\ldots,h_k\rangle$ be a finitely generated group with a fixed ordered set of generators $\langle h_1,h_2,\ldots,h_k\rangle$ and $G$ a topological group. The set of homomorphisms $\mathrm{Hom}(H,G)$ naturally sits inside $G^k$ via the evaluation map $f:\mathrm{Hom}(H,G)\to G^k$ given by $\rho\mapsto (\rho(h_1),\ldots,\rho(h_k))$. Hence, $\mathrm{Hom}(H,G)$ has an induced topology by $f$.

\begin{definition}\quad
The \textit{representation family} of discrete, faithful, totally loxodromic representations of $H$ into $G$ is
$$\mathds{R}\mathrm{ep}^{o}_{\textit{lox}}(H,G)=\left\{\rho\in \mathrm{Hom}(H,G):\rho~\mathrm{injective};~\rho(H)~\mathrm{discrete,~loxodromic}
\right\}.$$
\end{definition}

We endow $\mathds{R}\mathrm{ep}^{o}_{\textit{lox}}(H,G)$ with the topology of point-wise convergence. In this topology a sequence formed by the homomorphisms $\rho_j:H\to G,j=1,2,\ldots,$ converges to a homomorphism $\rho:H\to G$ if and only if for each $h\in H$ the sequence $\rho_1(h),\rho_2(h),\ldots$ converges to $\rho(h)$ in the topological group $G$.

In remainder of this section we assume $G=PSp(2,1)$.

The points of $\mathds{R}\mathrm{ep}^{o}_{\textit{lox}}(H,G)$ are identified with the $G$-conjugation equivalence classes of $G_0$, where $G_0=\rho(H)$. Let $G_0=\langle g_1,\ldots,g_k\rangle$ with $g_i=\rho(h_i)$, then  $g_i\neq g_j$ for all $i\neq j$.

We now describe the moduli space of $\mathds{R}\mathrm{ep}^{o}_{\textit{lox}}(H,G)$. First, we need to verify that the $G$-conjugation equivalence class of each loxodromic element $g_i$ can be uniquely determined by some parameters.

Cao and Gongopadhyay proved the following theorem on the conjugation classification for the complex and the quaternionic hyperbolic plane in \cite[Theorem 3.1 (i)]{Cao2012}:

\begin{theorem}\quad\label{Theorem 8}
Let $\mathbb{F}=\mathbb{C}$ or $\mathbb{H}$ and denote by $\hat{U}(2,1:\mathbb{F})$ the isometry group preserving the Hermitian form $\langle \mathbf{z,w}\rangle_4=-(\overline{z_0}w_1+\overline{z_1}w_0)+
\overline{z_2}w_2$ which gives the Siegel domain model. Suppose that $A\in \hat{U}(2,1:\mathbb{F})$
is a loxodromic element. Then $A$ is conjugate to an element of the form
$$L=L(\beta,\theta)=\left(\begin{array}{ccc}
                          re^{i\beta} & 0 & 0 \\
                           0 & r^{-1}e^{i\beta} & 0 \\
                           0 & 0 & e^{i\theta}
                         \end{array}\right)
,\quad r>0,\; r\neq1,
$$
such that $0\leq\beta$, $\theta\leq\pi$ when $\mathbb{F=H}$, and $\pi\leq\beta$, $\theta\leq\pi$ when $\mathbb{F=C}$.
\end{theorem}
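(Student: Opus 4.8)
The plan is to realise $A$ in a basis adapted to its two boundary fixed points and then read off the normal form from the defining relation $A^{*}H_{4}A=H_{4}$. Since $A$ is loxodromic it fixes exactly the two endpoints of its translation axis on $\partial\mathbf{H}_{\mathbb{F}}^{2}$, and it fixes each of them individually rather than interchanging them. Writing $e_{0}=(1,0,0)^{\mathsf T}$ and $e_{1}=(0,1,0)^{\mathsf T}$ for the two isotropic directions distinguished by $\langle\cdot,\cdot\rangle_{4}$, I would first invoke the double transitivity of the isometry group on $V_{0}$ (Proposition \ref{Proposition1}) to conjugate $A$ to an element $A'$ whose fixed points are $\mathbb{P}(e_{0})$ and $\mathbb{P}(e_{1})$. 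Then $A'$ preserves each of the lines $\mathbb{F}e_{0}$ and $\mathbb{F}e_{1}$, and therefore also their common $\langle\cdot,\cdot\rangle_{4}$-orthogonal complement, which is the positive line $\mathbb{F}e_{2}$ with $e_{2}=(0,0,1)^{\mathsf T}$. As these three lines span $\mathbb{F}^{2,1}$ and are each invariant, $A'$ is diagonal: $A'=\mathrm{diag}(\lambda_{0},\lambda_{1},\mu)$ with $\lambda_{0},\lambda_{1},\mu\neq0$.

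Next I would impose $A'^{*}H_{4}A'=H_{4}$ and compare entries. The off-diagonal $-1$ of the hyperbolic block forces $\bar\lambda_{0}\lambda_{1}=1$, i.e. $\lambda_{1}=\bar\lambda_{0}^{-1}$, while the positive entry forces $|\mu|=1$. In the complex case this already gives the normal form: writing $\lambda_{0}=re^{i\beta}$ with $r>0$ yields $\lambda_{1}=r^{-1}e^{i\beta}$ and $\mu=e^{i\theta}$, so $L=\mathrm{diag}(re^{i\beta},r^{-1}e^{i\beta},e^{i\theta})$, with $r\neq1$ because $A$ is loxodromic. The admissible range of $\beta,\theta$ is the full circle parameter, since inner conjugation in $\hat U(2,1:\mathbb{C})$ cannot alter the argument of a complex eigenvalue.

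For the quaternionic case the remaining task is to rotate $\lambda_{0}$ and $\mu$ into the complex line $\mathbb{R}+\mathbb{R}i$. Writing $\lambda_{0}=re^{\mathfrak{u}_{0}\phi}$ and $\mu=e^{\mathfrak{v}\theta}$ with $\mathfrak{u}_{0},\mathfrak{v}\in\mathfrak{sp}(1)$ and $\phi,\theta\in[0,\pi]$, I would conjugate $A'$ by a form-preserving diagonal $D=\mathrm{diag}(d_{0},d_{1},d_{2})$. The constraint $D^{*}H_{4}D=H_{4}$ forces $d_{1}=\bar d_{0}^{-1}$ and $|d_{2}|=1$; taking $|d_{0}|=1$ makes $d_{1}=d_{0}$. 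Choosing the unit quaternion $d_{0}$ with $d_{0}^{-1}\mathfrak{u}_{0}d_{0}=i$ sends $\lambda_{0}\mapsto re^{i\phi}$ and simultaneously $\lambda_{1}=\bar\lambda_{0}^{-1}\mapsto r^{-1}e^{i\phi}$ with the same angle, while choosing the unit quaternion $d_{2}$ with $d_{2}^{-1}\mathfrak{v}d_{2}=i$ sends $\mu\mapsto e^{i\theta}$. Since the two rotations act on disjoint diagonal blocks they may be performed independently, delivering $L(\beta,\theta)$ with $\beta=\phi\in[0,\pi]$ and $\theta\in[0,\pi]$.

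The main obstacle I expect is exactly this quaternionic normalisation step, together with the explanation of why the angle ranges differ between $\mathbb{C}$ and $\mathbb{H}$. The key fact is that every unit pure quaternion is $Sp(1)$-conjugate to $i$ (in particular $i$ and $-i$ are conjugate, via $j$), so over $\mathbb{F}=\mathbb{H}$ one can always force $\beta,\theta\in[0,\pi]$; over $\mathbb{F}=\mathbb{C}$ no inner conjugation realises complex conjugation, so the argument cannot be reflected into $[0,\pi]$ and the full range must be kept. Care is needed to check that the diagonal conjugator $D$ genuinely lies in $\hat U(2,1:\mathbb{H})$ and that bringing $\lambda_{0}$ to standard form does not disturb the relation $\lambda_{1}=\bar\lambda_{0}^{-1}$; the computation above shows that the single conjugator $d_{0}$ handles both eigenvalues coherently, and verifying this coherence is the crux of the argument.
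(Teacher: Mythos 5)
Your argument is correct, but note that the paper itself contains no proof of Theorem \ref{Theorem 8}: it is quoted as a known result from Cao--Gongopadhyay \cite{Cao2012} (their Theorem 3.1(i)) and used as a black box, so the only meaningful comparison is with that cited source. Your route --- normalize the two fixed points to $\mathbb{P}(e_0)$ and $\mathbb{P}(e_1)$ by double transitivity, deduce that $A'$ is diagonal from invariance of the two isotropic lines and of their common orthogonal complement $\mathbb{F}e_2$, read off $\lambda_1=\bar\lambda_0^{-1}$ and $|\mu|=1$ from $A'^*H_4A'=H_4$, then in the quaternionic case conjugate by $D=\mathrm{diag}(d_0,d_0,d_2)$ with unit quaternions rotating the relevant pure-quaternion axes onto $i$ --- is the standard geometric normalization, and your coherence check is exactly the right crux: since $\lambda_1=\bar\lambda_0^{-1}=r^{-1}e^{\mathfrak{u}_0\phi}$ has the \emph{same} axis $\mathfrak{u}_0$ as $\lambda_0$, the single unit $d_0$ simultaneously puts both hyperbolic eigenvalues in the complex line with the same angle, and $D$ genuinely satisfies $D^*H_4D=H_4$. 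Your explanation of the differing angle ranges (unit pure quaternions form one $Sp(1)$-conjugacy class, so $e^{\mathfrak{u}\theta}\sim e^{i\theta}\sim e^{-i\theta}$ over $\mathbb{H}$, whereas complex eigenvalue arguments are conjugation invariants) is also the correct reason. By contrast, Cao--Gongopadhyay derive this normal form inside a complete algebraic classification of all isometry types (elliptic, parabolic, loxodromic) via conjugacy and eigenvalue analysis of quaternionic matrices; your argument buys a shorter, purely geometric proof of just the loxodromic case, at the mild price of invoking two standard facts you should state explicitly: that a loxodromic element fixes each of its two boundary fixed points with attracting/repelling dynamics (which is what forces $r\neq1$), and that the relevant isometry group acts transitively on ordered pairs of distinct boundary points --- Proposition \ref{Proposition1} gives this for ${\rm Sp}(n,1)$, while for $\mathbb{F}=\mathbb{C}$ you need its ${\rm U}(2,1)$ analogue, which is not stated in this paper.
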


Note that the ball model and the Siegel domain model for hyperbolic space are interchanged by Cayley transformations. By Theorem \ref{Theorem 8}, we obtain that every loxodromic equivalence class with respect to the conjugation action of ${\rm Sp}(2,1)$ or ${\rm U}(2,1)$ can be uniquely determined by $r$, $\beta$ and $\theta$ parameters.
%Now, let's prove our another result, showed in introduction.
\begin{proof2}~

According to Proposition \ref{Proposition1}, we associate the quintuple $(p^+_i,p^-_i,r,\beta,\theta)$ to each $g_i$, where $p^+_i$ and $p^-_i$ are respectively the attracting and the repelling fixed points of $g_i$. Since $G_0$ is discrete, all fixed points $p^+_1,p^-_1,\ldots,p^+_k,p^-_k$ are distinct.

Then
$$(p^+_1,p^-_1,\ldots,p^+_k,p^-_k;r_1\ldots,r_k;\beta_1,\theta_1\ldots,\beta_k,\theta_k),$$
may be associated to $G_0=\langle g_1,\ldots,g_k\rangle$.

Now, according to Theorem \ref{theorem new1}, the proof is completed.\qed
\end{proof2}
\begin{remark}
If we restrict the Theorem \ref{Corollary new1} to complex space, then $\Theta=[-\pi,\pi]$.
\end{remark}
\begin{acknowledgements}
The author G. Gou wishes to express his thanks to Professor I. D. Platis for his warmly help, and Professor J. R. Parker, Professor E. Falbel and professor W. Cao for viewing the preprint of the paper and their suggestions. Besides, authors would like to thank NSFC (No. 11371126) and NSFC (No. 11701165) for financial support. We also thank the referees and the useful references.
\end{acknowledgements}
%    Insert the bibliography data here.

\end{document}